\newtheorem{thm}{Theorem}[section]
\newtheorem{lem}[thm]{Lemma}
\newtheorem{cor}[thm]{Corollary}
\newtheorem{claim}[thm]{Claim}
\newtheorem{rk}[thm]{Remark}
\newtheorem{definition}[thm]{Definition}
\newtheorem{example}[thm]{Example}
\newtheorem{lemma}[thm]{Lemma}
\newcommand{\T}{\mbox{${\mathbb  T}$}}
\newcommand{\CH}{\mbox{$\mathcal H$}}
\newcommand{\CM}{\mbox{$\mathcal M$}}
\newcommand{\CN}{\mbox{$\mathcal N$}}
\newcommand{\R}{\mbox{${\mathbb  R}$}}                  
\newcommand{\Na}{\mbox{${\mathbb  N}$}}                 
\newcommand{\Z}{\mbox{${\mathbb  Z}$}}                  
\newcommand{\Q}{\mbox{${\mathbb  Q}$}}              
\newcommand{\G}{\mbox{${\mathbb  G}$}}
\newcommand{\gp}{\mbox{${\mathbb  F}_{p}$}}             
\newcommand{\sub}{\subseteq}
\newcommand{{\ff}}{\mbox{$\varphi$}}                   
\newcommand{{\la}}{\mbox{$\lambda$}}                   
\newcommand{\lan}{\langle}
\newcommand{\ran}{\rangle}
\newcommand{\M}{\mbox{$\cal M$}}
\title{$G$-linear sets and torsion points in definably compact groups}
\author{Margarita
Otero\thanks{Partially supported by GEOR MTM2005-02568 and Grupos UCM 910444}\\Universidad Aut\'onoma de Madrid\and Ya'acov
Peterzil\\University of Haifa}
\begin{document}

\maketitle

\begin{abstract} Let $G$ be a definably compact group 
in an o-minimal expansion of a real closed field. We prove that if
$\dim(G\setminus X)< \dim G$ for some definable $X\sub G$ then $X$
contains a torsion point of $G$.  Along the way we develop a general
theory for so-called {\em $G$-linear sets}, and investigate
definable sets which contain abstract subgroups of
$G$.\end{abstract}

{\em Keywords}: o-minimality, definable group, torsion point.

Mathematics Subject Classification 2000: 03C64

\section{Introduction}

We prove:

\begin{thm} Let $G$ be a definably compact group in an
o-minimal expansion of a real closed field. If  $X\sub G$ is a
definable large set (i.e. $\dim(G\setminus X)<\dim G$) then $X$
contains a torsion point of $G$.
\end{thm}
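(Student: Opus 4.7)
The key topological observation is that, since $G\setminus X$ is definable of dimension strictly less than $\dim G$ and the ambient structure is o-minimal, $G\setminus X$ has empty interior in $G$. Hence $\mathrm{int}(X)$ is a dense open subset of $G$, and the theorem is equivalent to the statement that the torsion subgroup $T(G)$ is dense in $G$ in the definable manifold topology; once density of torsion is established, every nonempty open piece of $X$ — and there must be one, since $X$ is large — contains a torsion point.

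After reducing to the definably connected component $G^0$ (noting that $X\cap G^0$ remains large in $G^0$), I would next invoke the solution to Pillay's conjecture: $G^{00}$ exists and $G/G^{00}$ is a compact connected real Lie group of dimension equal to $\dim G$. Torsion is dense in any compact connected Lie group, and by the Edmundo--Otero torsion-lifting theorem every torsion element of $G/G^{00}$ has a torsion lift in $G$ of the same order. Consequently $T(G)\cdot G^{00}=G$, i.e.\ the torsion subgroup of $G$ is dense modulo the infinitesimal subgroup.

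To upgrade this quotient density to honest topological density I would argue by contrapositive, and this is where the $G$-linear set machinery of the paper enters. Suppose $T(G)\cap X=\emptyset$, so $T(G)\subseteq G\setminus X$, a definable set of dimension $<\dim G$. The announced results on definable sets containing abstract subgroups of $G$ should yield a statement of the shape: any abstract subgroup $H\leq G$ contained in a definable set of dimension $<\dim G$ is captured in a proper $G$-linear subset $L$ of $G$, and any such proper $L$ satisfies $L\cdot G^{00}\neq G$. Applying this to $H=T(G)$ directly contradicts $T(G)\cdot G^{00}=G$.

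The main obstacle, I expect, is the structural step inside the contrapositive: showing that an abstract subgroup trapped in a small definable set must be forced inside a proper $G$-linear set, and that such a $G$-linear set cannot surject onto $G/G^{00}$. The Pillay-conjecture and torsion-lifting inputs are standard at this point; all of the real geometric content sits in the $G$-linear set formalism, which is presumably engineered precisely to make that dichotomy go through.
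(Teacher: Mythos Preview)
Your contrapositive reduction and the plan to trap $Tor(G)$ inside a connected locally-definable subgroup $\mathcal{H}$ of dimension $\leq\dim(G\setminus X)<\dim G$ via the $G$-linear machinery are exactly what the paper does (Theorem~\ref{abstgroups} and Corollary~\ref{ctor}). The gap is in your final step.

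You want to finish through $G^{00}$: from $T(G)\cdot G^{00}=G$ you would get a contradiction provided ``any proper $L$ satisfies $L\cdot G^{00}\neq G$''. But nothing in the $G$-linear formalism delivers this; it amounts to saying that a (locally-)definable set of sub-maximal dimension cannot surject onto $G/G^{00}$ under the standard-part map, which is a form of compact domination. The paper is explicit that its theorem is a \emph{weak approximation} to a consequence of the compact domination conjecture, still open when the paper was written, so you are assuming in disguised form precisely the input the paper is working to avoid. (A side remark: the ``Edmundo--Otero torsion-lifting'' you cite is not in \cite{04EO}; that paper computes $G[m]$ and $\pi_1(G)$ for abelian $G$, not lifts from $G/G^{00}$.)

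The paper's actual endgame in the abelian case makes no reference to $G^{00}$. Having trapped a $p$-Sylow inside a connected locally-definable $\mathcal{H}$, it chooses $n$ independent $p$-torsion points, joins each to $0$ by a path in $\mathcal{H}$, multiplies by $p$ to obtain loops in $\mathcal{H}$, shows (Lemma~\ref{indep}) that these loops are $\mathbb{Z}$-independent in $\pi_1(G)\cong\mathbb{Z}^n$, and then proves by a homology/degree argument (Lemma~\ref{summap}) that the sum map $(t_1,\dots,t_n)\mapsto\sum\gamma_i(t_i)$ is onto $G$, forcing $\mathcal{H}=G$ (Corollary~\ref{locdef}). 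The general case is handled by combining this with very good reduction --- hence the already-known instance of compact domination --- for $G/Z(G)$. That algebraic-topological core is the idea your outline is missing.
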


 This is a weak approximation to the
conjecture that every generic set in a definably compact group
contains a torsion point, a conjecture which itself follows from the
``compact domination conjecture'' from \cite{07HPP}.

The proof starts with the abelian case, where we first develop, in
 o-minimal expansions of an ordered group, a notion of a coset of a definable local
subgroup (a $G$-linear set). We then use tools from Algebric
Topology, in expansions of real closed fields, to conclude the
theorem.

\section{$G$-linear sets}
In this section \M\  will denote an
 o-minimal expansion of an ordered group. By   definable we mean definable in \M. $G$ will denote a definable   abelian group. All topological concepts  are with respect to the group topology such $G$ is equipped with. (We
believe that most, if not all, of the work below can be  developed
in an arbitrary definable group, but for simplicity we limit
ourselves to the commutative setting.)

\begin{definition} \emph{1)} Given two sets $X,Y\subseteq G$ and $a\in G$, we say that
{\em  $X$ and $Y$ have the same germ at $a$}, in notation $X=_a Y$,
if there exists an open neighborhood $U$ of $a$ such that $X\cap
U=Y\cap U$. We say that {\em the  germ of $X$ at $a$ is contained in the
germ of $Y$ at $a$}, in notation $X\sub_a Y$, if there exists an open $U\ni a$
such that $X\cap U\subseteq Y\cap U$.

\emph{2)} Given $X\subseteq G$ and $g,h\in G$, we say that {\em
the germ of $X$ at $g$ is $G$-equivalent to the germ of $X$ at
$h$} if $X-g=_0 X-h$. (Note that if $X$ is a definable set then we
obtain in this way a definable equivalence relation on $G$.)
\end{definition}

\begin{definition} Let $X$ be a definable subset of $G$.

\emph{1)} $X$  is called {\em $G$-linear} if for every $g,h\in X$ we
have $X-g=_0 X-h$. Given $g\in X$, we say that {\em $X$ is locally
$G$-linear at $g$} if there exists an open $U\ni g$ such that for
every $h\in U\cap X$ we have $X-g=_0 X-h$.

Two $G$-linear sets $X,Y\sub G$ are called {\em $G$-equivalent} if
for every $g\in X$, $h\in Y$, we have $X-g=_0 Y-h$.

 \emph{2)} Let $X$ be a $G$-linear set. A definable $Y\sub G$
is called {\em a $G$-subset of $X$}  if for every $h\in Y$ and
$g\in X$, we have $Y-h\sub_0 X-g$.
\end{definition}

\begin{example}
Let $G$ be a definable abelian group and $H$  a definable
subgroup. If $X$ is a relatively open subset of a coset of $H$
then $X$ is $G$-linear. If $Y$ is any definable subset of a coset
of $H$ then $Y$ is a $G$-linear subset of $X$.
\end{example}

We observe without a proof (we will not be using it): If $\CM$
expands a real closed field then $Y$ is a $G$-subset of the
$G$-linear set $X$ if and only if at every smooth point $y\in Y$,
the tangent space of $Y$ at $y$ is a subset of the tangent space of
$X$ at any of its points point.

\begin{lemma}\label{linear0}
If $X$ is definably connected and locally $G$-linear at every $g\in
X$ then $X$ is $G$-linear.
\end{lemma}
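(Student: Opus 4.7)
The plan is to exhibit a single $G$-equivalence class that is clopen in $X$ and then invoke definable connectedness. To that end, define a relation on $X$ by declaring $g \sim h$ iff $X - g =_0 X - h$. Reflexivity and symmetry are immediate, and transitivity follows at once because the intersection of two open neighborhoods of $0$ is again an open neighborhood of $0$. Since the condition ``$X - g =_0 X - h$'' is a definable condition on the pair $(g,h)$ (it asserts the existence of an open neighborhood of $0$ on which the two translates agree), each equivalence class is a definable subset of $X$.

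Next I would show that every class is open in $X$. Fix $g \in X$ and let $h \in [g]$. By local $G$-linearity at $h$, there is an open neighborhood $V \ni h$ such that $X - h =_0 X - h'$ for every $h' \in V \cap X$. Combining this with the defining relation $X - g =_0 X - h$ and using transitivity, one gets $X - g =_0 X - h'$, i.e.\ $h' \in [g]$. Thus $V \cap X \subseteq [g]$, so $[g]$ is open in $X$.

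Finally, the complement $X \setminus [g] = \bigcup_{k \in X\setminus [g]}[k]$ is a union of open (and definable) classes, hence also open in $X$. Therefore $[g]$ is a non-empty definable clopen subset of $X$, and definable connectedness forces $[g] = X$. Since $g$ was arbitrary this shows $X - g =_0 X - h$ for all $g, h \in X$, i.e.\ $X$ is $G$-linear.

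There is no real obstacle here: the argument is the standard ``local implies global under connectedness'' pattern, and the only point requiring a mild check is that the germ relation is transitive (which is clear) and definable (which it is, since ``$X \cap U = Y \cap U$ for some open $U$ around $a$'' is a first-order condition in the parameters defining $X$, $Y$, and $a$). In particular, one does not need any additional structure on $\M$ beyond what is already assumed in the section (an o-minimal expansion of an ordered group).
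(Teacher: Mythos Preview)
Your proof is correct and follows essentially the same argument as the paper's: the $G$-equivalence classes are relatively open by local $G$-linearity, hence relatively closed, so definable connectedness forces a single class. Your write-up is just more explicit about transitivity and definability (the paper already recorded the latter in Definition~2.1(2)), but the underlying idea is identical.
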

\begin{proof} Because $X$ is locally $G$-linear at every point the $G$-equivalence class of
every $g\in X$ is relatively open in $X$. But then it is also
relatively closed (since its complement is a union of open sets), so
by definable connectedness there exists only one $G$-equivalence
class, so $X$ is $G$-linear.\end{proof}

\begin{definition} Given $X\sub G$, we let
$$X_{lin}=\{g\in X: \mbox{ $X$ is locally $G$-linear at }g\}.$$
\end{definition}

Notice that $X_{lin}$ is a relatively open (possibly empty) subset
of $X$. By the last lemma, every definably connected component of
$X_{lin}$ is a $G$-linear set, but $X_{lin}$ itself might not be
$G$-linear (E.g., if  $X=H_1\cup H_2$ is the union of two definable
subgroups of $G$, none containing the other,  then
$X_{lin}=X\setminus (H_1\cap H_2)$ is not a $G$-linear set). It is
easy to see that $X_{lin}$ is itself $G$-linear if and only if all
its definably connected components are $G$-equivalent.

\noindent{\bf Notation} We denote by $X_{lin}^{max}$ the union of
all components of $X_{lin}$ of maximal dimension.
\\

\noindent{\bf Infinitesimals}

 It might be easier here to use the
language of infinitesimals: We move to an $|M|^+$-saturated
elementary extension $\CN$ of $\CM$. For $g\in G(\CM)$, we denote by
$\nu_g$ the intersection of all $\CM$-definable open neighborhoods
of $g$  (with respect to the group topology) in the structure $\CN$.
For $X\sub G$, we write $\nu_g(X)=\nu_g\cap X.$ Notice that for $X,Y\sub G$
 and $g,h\in G$, $\nu_g(X)=\nu_g( Y)$ iff $X=_gY$, and $\nu_g( X)-g=\nu_h( X)-h$
iff $X-g=_0 X-h$.

We will be using the following simple observation: If $X, Y$ are
$A$-definable sets, and $x$ is generic in $X$ and in $Y$ over $A$
then $\nu_x(X)\sub Y$.

\begin{lemma}\label{linear1}  $X$ is locally $G$-linear at $g\in X$ if and only if
$\nu_g(X)-g$ is a subgroup of $G$. In particular, $X$ is a $C^0$-manifold near $g$.
\end{lemma}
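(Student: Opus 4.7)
\emph{Plan.}
The approach is to translate both directions into the language of infinitesimals introduced just above the lemma. Recall that for $g,h\in G(\CM)$ the relation $X-g=_0 X-h$ is equivalent to $\nu_g(X)-g=\nu_h(X)-h$. Since the set $\{h\in G:X-g=_0 X-h\}$ is $\CM$-definable with parameter $g$, a standard saturation argument shows that $X$ is locally $G$-linear at $g$ if and only if $X-g=_0 X-h$ holds for every $h\in\nu_g(X)$, where $h$ is now allowed to be a point of $\CN$ infinitely close to $g$. Extending the notation by setting $\nu_h:=h+\nu_0$ for such $h$, the condition becomes: $\nu_g(X)-g=\nu_h(X)-h$ for every $h\in\nu_g(X)$.

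The key calculation is then immediate. Write $H:=\nu_g(X)-g$, so $H\subseteq\nu_0$ and $0\in H$ (since $g\in\nu_g(X)$). Any $h\in\nu_g(X)$ has the form $h=g+a$ with $a\in H$ infinitesimal, so $\nu_h=g+a+\nu_0=g+\nu_0=\nu_g$, whence $\nu_h(X)=\nu_g(X)=g+H$ and $\nu_h(X)-h=H-a$. The infinitesimal condition thus reduces to $H=H-a$ for every $a\in H$, which, given $0\in H$, is equivalent to $H$ being a subgroup: setting $b=0$ in $H=H-a$ gives $-a\in H$ (closure under inverses), and then $a+b=b-(-a)\in H-(-a)=H$ gives closure under addition.

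For the final assertion that $X$ is a $C^0$-manifold near $g$: local $G$-linearity provides a $\CM$-definable neighborhood $U$ of $g$ such that for every $h\in U\cap X$ translation by $h-g$ carries a neighborhood of $g$ in $X$ homeomorphically onto a neighborhood of $h$ in $X$. Combining this uniform local homogeneity with o-minimal cell decomposition forces $g$ to lie in the relative interior of a single cell of $X$ of dimension $\dim X$, and cells are by construction $C^0$-manifolds. The main obstacle I anticipate is the opening translation step, where one must carefully apply saturation and harmlessly extend $\nu_h$ beyond $G(\CM)$; after this the subgroup characterization is a one-line algebraic computation, and the manifold conclusion follows from cell decomposition applied to a locally homogeneous definable set.
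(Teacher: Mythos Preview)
Your argument for the ``if'' direction (subgroup $\Rightarrow$ locally $G$-linear) is correct and essentially matches the paper's: once $H=\nu_g(X)-g$ is a subgroup, the computation $\nu_h(X)-h=H-a=H$ for $h=g+a\in\nu_g(X)$ goes through, and saturation of $\CN$ over $\CM$ lets you pass from agreement on $\nu_0$ back to an $\CM$-definable witnessing neighborhood.

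The ``only if'' direction, however, has a genuine gap in your translation step. You assert that for $h\in\nu_g(X)\subseteq G(\CN)$ the condition $X-g=_0 X-h$ (read in $\CN$) is equivalent to $\nu_g(X)-g=\nu_h(X)-h$ under your extended convention $\nu_h:=h+\nu_0$. The paper only states this equivalence for $g,h\in G(\CM)$, and the forward implication does not obviously survive your extension: when $h\notin G(\CM)$, the open set $V$ witnessing $(X-g)\cap V=(X-h)\cap V$ is merely $\CN$-definable (it may use $h$ as a parameter), so it can be strictly smaller than $\nu_0$. Agreement on such a $V$ does not give agreement on all of $\nu_0$, which is what $H=H-a$ requires. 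Concretely, to show $H$ is closed under differences you need $g+h_1-h_2\in X$ for arbitrary $h_1,h_2\in\nu_g(X)$; from $X-g=_0 X-h_2$ you only know $(X-g)\cap V_{h_2}=(X-h_2)\cap V_{h_2}$ for some $V_{h_2}$ depending on $h_2$, and there is no reason $h_1-h_2$ lies in $V_{h_2}$.

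The paper circumvents this by a genericity argument rather than a direct translation: it picks $h$ \emph{generic in $X$ over $g$} with $X-g=_0 X-h$, observes that the witnessing $U$ can be taken over parameters independent of $h$, and then uses genericity of $h$ to obtain a whole neighborhood $V\ni h$ on which $(X-g)\cap U=(X-h')\cap U$ holds uniformly for $h'\in X\cap V$. From this uniform statement one reads off that $\nu_h(X)-h$ is a subgroup, and since $\nu_g(X)-g=\nu_h(X)-h$ the conclusion follows. Your approach, as written, is missing precisely this uniformity; you would need either to justify the extended equivalence (which is not immediate) or to import a genericity argument of this kind.
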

\begin{proof} : If $\nu_g(X)-g$ is a subgroup then for every $h\in
\nu_g(X)$, we have $h-g+\nu_g(X)=\nu_g(X)$. It easily follows that
$X$ is locally $G$-linear at $g$.

Conversely,  assume that $X$ is locally $G$-linear at $g$. We need
to prove: For all $h_1,h_2\in X$ sufficiently close to $g$ we have
$(h_1-g)-(h_2-g)=h'-g$, for some $h'\in X$, or equivalently,
$g+(h_1-h_2)\in X$.

We pick $h\in X$ close to $g$ and generic over $g$  such that
$X-g=_0 X-h$. We take $U\ni 0$, definable over
 parameters independent of $g,h$,   such that
 $(X-g)\cap U=(X-h)\cap U$. Because of the
genericity of $h$, there exists a neighborhood $V\ni h$, which we
may assume is contained in $h+U$, such that for all $h'\in X\cap V$,
$$(X-h)\cap U=(X-g)\cap U=(X-h')\cap U.$$ By adding $h$ to both sides
of the last equation, we obtain: For all $h'\in X\cap V$, $X\cap
(h+U)=(X-h'+h)\cap (h+U),$ hence (since $V\sub (h+U)$) also for all
$h'\in X\cap V$,
$$X\cap V=(X-h'+h)\cap V.$$

In particular, if $h'',h'\in X$ are sufficiently close to $h$ (so
that $h''-h'+h\in V$) then we have $(h''-h')+h$ in $X$. It follows,
as was pointed out above, that $\nu_h(X)-h$ is a subgroup of $G$.
Because $\nu_g(X)-g=\nu_h(X)-h$, it also follows that $\nu_g(X)-g$
is a subgroup of $G$.

Because the germ of $X$ at $g$ is $G$-equivalent to the germ of $X$
at some generic point near $g$, it follows that $X$ is a
$C^0$-manifold near $g$.
\end{proof}

\noindent{\bf Notation. } For a $G$-linear $X$, we denote by
$\nu(X)$ the group $\nu_x(X)-x$, for some (all) $x\in X$. Notice that
 $Y$ is a $G$-subset of $X$ iff
  $\nu_h(Y)-h$ is a subset of the group
 $\nu(X)$, for every $h\in Y$. Note also that two $G$-linear sets
  $X_1, X_2$ are $G$-equivalent if and only if
  $\nu(X_1)=\nu(X_2)$.

\begin{lemma}\label{sum1} Let $X,Y\sub G$ be definable sets of
dimension $d$. If $\dim(X+Y)=d$ then $X_{lin}$ is large in $X$,
$Y_{lin}$ large in $Y$ and $X_{lin}^{max}$, $Y_{lin}^{max}$ are
$G$-linear sets that are $G$-equivalent.
\end{lemma}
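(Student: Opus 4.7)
The plan is to exploit the fact that $\dim(X+Y)=d$ forces the generic fibre of the addition map $(x,y)\mapsto x+y$ to be $d$-dimensional; combined with the observation quoted just before Lemma \ref{linear1}, this pins down the germ $\nu_x(X)-x$ for any generic $x\in X$. Working in a saturated elementary extension $\CN$ of $\CM$, fix parameters $A$ over which $X$ and $Y$ are defined, take $x$ generic in $X$ over $A$ and $y$ generic in $Y$ over $Ax$, and set $s=x+y$. A dimension count gives $\dim(xy/A)=2d$ and $\dim(s/A)\le d$, whence $\dim(x/As)=\dim(y/As)=d$; so $x$ is generic over $As$ in both $X$ and the $As$-definable set $s-Y$, and symmetrically $y$ in $Y$ and $s-X$ (all four sets of dimension $d$). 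Two applications of the pre-Lemma observation yield $\nu_x(X)\sub s-Y$ and $\nu_y(Y)\sub s-X$; the involution $z\mapsto s-z$ sends each infinitesimal set into the other, and since it is its own inverse both inclusions are equalities, so $\nu_x(X)=s-\nu_y(Y)$. Translating by $-x$ yields the key identity $I_X=-I_Y$, where $I_X:=\nu_x(X)-x$ and $I_Y:=\nu_y(Y)-y$.

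Next I show $I_X$ depends only on $A$: for two generics $x_1,x_2$ of $X$ over $A$, pick $y$ generic in $Y$ over $Ax_1x_2$; the identity applies to both pairs $(x_i,y)$, giving $\nu_{x_1}(X)-x_1=\nu_{x_2}(X)-x_2=-I_Y$. Consider the $A$-definable equivalence relation $x\sim x'\iff X-x=_0 X-x'$ on $X$, and let $C:=\{x\in X:\dim[x]_\sim=d\}$, which is $A$-definable as the set of $x$ whose class in the $A$-definable family has top dimension. By the equivalence of infinitesimal and $\CM$-definable germs recorded in the Infinitesimals discussion, any two generics $x_1,x_2$ of $X$ over $A$ satisfy $X-x_1=_0 X-x_2$, so $C$ is a single $\sim$-class containing every generic of $X$ over $A$; in particular $\dim(X\setminus C)<d$. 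Hence $\partial_X C$ is $A$-definable of dimension $<d$, and no generic $x$ over $A$ can lie in it; every such $x$ therefore lies in $\mathrm{int}_X C$, so some neighborhood $U$ of $x$ satisfies $U\cap X\sub C$, giving $X-x'=_0 X-x$ for every $x'\in U\cap X$. Thus $X$ is locally $G$-linear at every generic $x$, and $X_{lin}$ is large in $X$; by symmetry the same holds for $Y_{lin}$ in $Y$.

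Applying Lemma \ref{linear1} at a generic $x$ makes $I_X$ a subgroup of $G$, and similarly $I_Y$; since any subgroup satisfies $H=-H$, the key identity gives $I_X=I_Y$. Each definably connected component of $X_{lin}$ is $G$-linear by Lemma \ref{linear0}, and at a generic point of any maximum-dimensional component $D$ the openness of $D$ in $X$ forces $\nu(D)=\nu_x(X)-x=I_X$; because $D$ is $G$-linear this common value is its $\nu$-invariant. Consequently all maximum-dimensional components of $X_{lin}$ are $G$-equivalent, so $X_{lin}^{max}$ is $G$-linear with $\nu(X_{lin}^{max})=I_X$; analogously $\nu(Y_{lin}^{max})=I_Y=I_X$, delivering the required $G$-equivalence. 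The principal technical step, where I expect the main obstacle to lie, is establishing that the top-dimensional $\sim$-class $C$ is unique and $A$-definable so that its boundary is likewise $A$-definable and of low dimension; this is what licenses the use of genericity of $x$ to place $x$ in $\mathrm{int}_X C$ and hence gives local $G$-linearity at $x$.
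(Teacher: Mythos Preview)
Your proof is correct and follows essentially the same approach as the paper: pick independent generics $x\in X$, $y\in Y$ and use the constraint $\dim(X+Y)=d$ to identify $\nu_x(X)-x$ with $\nu_y(Y)-y$, then deduce that every generic lies in $X_{lin}$ and that all top-dimensional components share this germ. The differences are only bookkeeping---the paper routes through $Z=X+Y$ to obtain $\nu_g(X)-g=\nu_{g+h}(Z)-(g+h)=\nu_h(Y)-h$ directly, whereas you work with the translate $s-Y$ and the involution $z\mapsto s-z$ (reaching $I_X=-I_Y$ first and then using the subgroup property), and your passage from ``same germ at all generics'' to ``locally $G$-linear at each generic'' via the $A$-definable class $C$ and its relative interior is more explicit than the paper's one-line jump.
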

\begin{proof} We assume that $X$ and $Y$ are $\emptyset$-definable and write $Z=X+Y$.
Take $g$ generic in $X$ over $\emptyset$ and $h$ generic in $Y$
over $g$. Then $g+h$ is generic in $Z$ over each $g$ and $h$ and
hence for every two  $x,y\in \{g,h,g+h\}$ we have $\dim(x/y)=d$.

For every $\emptyset$-definable neighborhood $U$ of $g$, we have
$(U\cap X)+h\sub Z$. Moreover $g+h$ is generic in $(U\cap X)+h$
over $h$ and therefore $(U\cap X)+h$ contains $\nu_{g+h}(Z)$. It
follows that $\nu_g(X)+h\supseteq \nu_{g+h}(Z)$. By symmetry we
may conclude that $\nu_g(X)+h=\nu_{g+h}(Z)$, or equivalently,
$\nu_g(X)-g=\nu_{g+h}(Z)-(g+h)$.

In the very same way we see that $\nu_h(Y)-h=\nu_{g+h}(Z)-(g+h)$.
Hence, we showed that for {\em every} generic $g\in X$, $h\in Y$
which are independent from each other we have
$\nu_g(X)-g=\nu_h(Y)-h$.

Fixing a generic $h\in Y$, this implies that $\nu_{g'}(X)$ is
constant as $g'$ varies over all elements $g'$ which are generic
in $X$ over $h$. In particular, $X_{lin}$ contains all those
elements $g'$ and moreover all the components of $X_{lin}$ of
dimension $d$ are $G$-equivalent. The same argument shows that all
components of $Y_{lin}$ of dimension $d$ are $G$-equivalent to
each other and that $X_{lin}^{max}$ is $G$-equivalent to
$Y_{lin}^{max}$.
\end{proof}

\begin{cor}\label{sum2}
Let $X,Y,Z\sub G$ be definable sets of dimension $d$. Assume that
for every $g$ in a large set $X_0\sub X$, we have $\dim((g+Y)\cap
Z)=d$. Then $X_{lin}$ is large in $X$.
\end{cor}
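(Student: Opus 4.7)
The plan is to mimic the proof of Lemma~\ref{sum1}, using the hypothesis $\dim((g+Y)\cap Z) = d$ in place of $\dim(X+Y) = d$, together with the infinitesimal observation stated just before Lemma~\ref{linear1}. After replacing $X$ by $X_0$ we may assume that the hypothesis holds for every $g\in X$. Pick $g$ generic in $X$ over $\emptyset$ and $y$ generic in $Y\cap(Z-g)$ over $g$, so that $g+y\in Z$. A dimension count gives $\dim(g/y)=\dim(y/g)=d$, hence $y$ is generic in $Y$ over $\emptyset$ and $g+y$ is generic in $Z$ over each of $g$ and $y$. Projecting $W := \{(g,y)\in X\times Y : g+y\in Z\}$ (which has dimension $2d$) to $Y$ and using the fiber dimension formula shows that $X_y := X\cap(Z-y)$ has dimension $d$; by hypothesis $Y_g := Y\cap(Z-g)$ has dimension $d$.

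The crucial step is the identification $\nu_g(X)=\nu_g(X_y)$, which will let us substitute $X_y$ for $X$ in the infinitesimal computation. Since $g$ is generic in both $X$ and $X_y$ over $y$, the observation preceding Lemma~\ref{linear1} yields $\nu_g(X)\subseteq X_y$, and intersecting with $\nu_g$ gives $\nu_g(X)\subseteq\nu_g(X_y)$; the reverse inclusion is immediate from $X_y\subseteq X$. Symmetrically, $\nu_y(Y)=\nu_y(Y_g)$.

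With these substitutions the computation of Lemma~\ref{sum1} runs on the pair $(X_y,Y_g)$: for any $\emptyset$-definable open neighborhood $U$ of $g$, the set $(U\cap X_y)+y$ is a $y$-definable subset of $Z$ containing $g+y$, which is generic in $Z$ over $y$; the observation therefore forces $(U\cap X_y)+y\supseteq \nu_{g+y}(Z)$. Combined with $X_y+y\subseteq Z$ this yields $\nu_g(X_y)+y=\nu_{g+y}(Z)$, and hence $\nu_g(X)-g=\nu_{g+y}(Z)-(g+y)$. The symmetric argument gives $\nu_y(Y)-y=\nu_{g+y}(Z)-(g+y)$, so $\nu_g(X)-g=\nu_y(Y)-y$. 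Fixing $y$ generic and letting $g'$ vary over elements generic in $X$ over $y$, the germ $\nu_{g'}(X)-g'$ is constant, whence $X$ is locally $G$-linear at every such $g'$; as these form a large subset of $X$, $X_{lin}$ is large in $X$.

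The main obstacle is that we do not control $\dim(X+Y)$, so Lemma~\ref{sum1} does not apply directly. The identification $\nu_g(X)=\nu_g(X_y)$ is precisely the device that lets us replace $X$ near $g$ by the $y$-definable subset $X_y$, whose translate by $y$ does lie in $Z$; once this is in place, the rest is a formal adaptation of the proof of Lemma~\ref{sum1}.
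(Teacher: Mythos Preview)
Your infinitesimal computation is correct up through the identity $\nu_g(X)-g=\nu_y(Y)-y$ for the specific pair $(g,y)$ you constructed. The gap is in the last step. When you fix $y$ and let $g'$ range over elements generic in $X$ over $y$, your earlier computation does not apply to $(g',y)$: every use of $X_y$, $Y_{g'}$ and of $\nu_{g+y}(Z)$ presupposed $g'+y\in Z$, and for an arbitrary $g'$ generic in $X$ over $y$ there is no reason that $g'+y\in Z$. (In Lemma~\ref{sum1} this step is free because there $Z=X+Y$.) Indeed, if your argument worked it would show that $\nu_{g'}(X)-g'$ is the same for all $g'$ in a large subset of $X$, i.e.\ that all $d$-dimensional components of $X_{lin}$ are $G$-equivalent; the paper explicitly remarks, right after the corollary, that this stronger conclusion is \emph{not} available under the present hypotheses. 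A concrete obstruction: take $X=X_1\cup X_2$ and $Y=Y_1\cup Y_2$ with $X_i+Y_i\subseteq Z$ but $(X_i+Y_j)\cap Z=\emptyset$ for $i\neq j$; then $X_0=X$, yet for $y\in Y_1$ the set $X_y=X\cap(Z-y)$ is only $X_1$, and a generic $g'\in X_2$ over $y$ fails $g'+y\in Z$.

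The paper's proof closes exactly this gap by one extra move: from $g+\nu_h(Y)\subseteq Z$ (equivalently, your $\nu_y(Y)=\nu_y(Y_g)$) it extracts genuine definable neighborhoods $V\ni h$, $W\ni c$ over parameters independent of $g$ with $g+(V\cap Y)\subseteq W\cap Z$, and then uses that $g$ is generic over those parameters to thicken this to $(U\cap X)+(V\cap Y)\subseteq W\cap Z$ for some $U\ni g$. Now one is literally in the situation of Lemma~\ref{sum1} for the pair $(U\cap X,\,V\cap Y)$, which gives local $G$-linearity of $X$ at $g$. Your argument can be repaired by inserting this neighborhood step (or, equivalently, by restricting the variation of $g'$ to generics of $X_y$ and then observing $\nu_{g'}(X)=\nu_{g'}(X_y)$ for such $g'$, which yields local $G$-linearity at $g$ but not the spurious global constancy).
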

\begin{proof} Without loss of generality, $X,Y,Z,X_0$ are
$\emptyset$-definable, and hence every generic element $g\in X$
belongs to $X_0$. For every such $g\in X_0$ there exists $c\in
(g+Y)\cap Z$ which is generic in $Z$ over $g$. It follows that
$h=c-g$ is in $Y$ and, as before, the dimension of any two of
$\{g,h,c\}$ is $2d$. We can now find open neighborhoods $V$ of $h$
and $W$ of $c$, defined over independent parameters, such that
$g+(V\cap Y)\sub (W\cap Z)$. Because of the genericity of $g$
there is a neighborhood $U$ of $g$ such that $(U\cap X)+(V\cap
Y)\sub (W\cap Z)$. In particular, $\dim((U\cap X)+(V\cap Y))=d$.
We can therefore apply Lemma \ref{sum1} to $U\cap X$ and $V\cap Y$
and conclude that $X$ is locally $G$-linear at $g$.\end{proof}

Note that we cannot conclude, under the assumption of the last
Lemma, that all components of $X_{lin}^{max}$ are $G$-equivalent.

\begin{definition}
If $U$ is an open symmetric ($U=-U$) neighborhood of $0$ and $Y\sub
G$ a definable set, we say that $h_1,h_2\in Y$ are {\em
$U$-connected in $Y$} if $h_2\in h_1+U$ and there exists a definable
path in $Y$ connecting $h_1$ and $h_2$, which is contained in
$h_1+U$.
\end{definition}

\begin{lemma}\label{linear2} Let $X$ be $G$-linear and $Y$ a $G$-subset of $X$.
 Take $g\in X$, and assume
that $U$ is an open symmetric neighbourhood of $0$ such that $(g+U)\cap X$ is
relatively closed in $g+U$. Then, for any $h_1,h_2\in Y$ that are
$U$-connected in $Y$, we have $g+(h_1-h_2)\in X$
\end{lemma}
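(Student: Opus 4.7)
The plan is to interpolate between $g$ and $g+(h_1-h_2)$ using the given path in $Y$, and to show by a connectedness argument that the entire interpolating path stays in $X$. Let $\gamma:[0,1]\to Y$ be a definable path in $Y$ from $h_1$ to $h_2$ with image in $h_1+U$. Define
\[ f:[0,1]\to G,\qquad f(t)=g+h_1-\gamma(t). \]
Then $f$ is continuous and definable, $f(0)=g$, and $f(1)=g+(h_1-h_2)$. Since $\gamma(t)\in h_1+U$ and $U=-U$, one has $f(t)\in g+U$ for every $t$. Let
\[ A=\{t\in[0,1]:f(t)\in X\}. \]
We have $0\in A$ (because $g\in X$) and we want to show $1\in A$; by definable connectedness of $[0,1]$ it suffices to show that $A$ is both relatively closed and relatively open in $[0,1]$.

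Closedness follows directly from the hypothesis: since $f$ is continuous and $f([0,1])\subseteq g+U$, we have $A=f^{-1}\bigl(X\cap(g+U)\bigr)$, and by assumption $X\cap(g+U)$ is closed in $g+U$. For openness, I would work with infinitesimals. Fix $t_0\in A$, so $f(t_0)\in X$. By $G$-linearity of $X$ the set $H:=\nu(X)=\nu_{f(t_0)}(X)-f(t_0)$ is a subgroup of $G$ (in the saturated extension $\CN$). For $t\in\nu_{t_0}$, continuity of $\gamma$ gives $\gamma(t)\in\nu_{\gamma(t_0)}(Y)$. Since $Y$ is a $G$-subset of $X$, $\nu_{\gamma(t_0)}(Y)-\gamma(t_0)\subseteq H$, whence $\gamma(t)-\gamma(t_0)\in H$. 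Using that $H$ is a subgroup,
\[ f(t)-f(t_0)=\gamma(t_0)-\gamma(t)\in H, \]
so $f(t)\in f(t_0)+H=\nu_{f(t_0)}(X)\subseteq X$. Thus $\nu_{t_0}\subseteq A$, and by saturation $A$ contains a definable open neighborhood of $t_0$, establishing openness.

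Combining the two, $A=[0,1]$ and in particular $1\in A$, i.e.\ $g+(h_1-h_2)\in X$. The only subtle point is the openness step: one must be careful that $G$-linearity is applied at the point $f(t_0)\in X$ (not at $g$) so that $\nu(X)$ is indeed the relevant subgroup translate there, and that the containment $\nu_{\gamma(t_0)}(Y)-\gamma(t_0)\subseteq\nu(X)$ supplied by the $G$-subset hypothesis lines up with the translate $f(t)-f(t_0)$ after the sign flip. Beyond this bookkeeping, the argument is a standard continuity/connectedness sweep.
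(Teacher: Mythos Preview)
Your proof is correct and follows essentially the same route as the paper's: both define the interpolating map $t\mapsto g+h_1-\gamma(t)$, take the set of times landing in $X$, and show it is clopen in the interval—closedness from the hypothesis on $(g+U)\cap X$, openness from $\nu_{\gamma(t_0)}(Y)-\gamma(t_0)\subseteq\nu(X)$ together with the fact that $\nu(X)$ is a subgroup. The only cosmetic difference is that the paper phrases the openness step as ``for $t'$ close to $t_0$'' while you spell it out with infinitesimals and saturation; your formulation of $A$ is in fact slightly cleaner than the paper's (which uses the downward-closed variant $T$).
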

\begin{proof} Because $h_1, h_2$ are $U$-connected in $Y$,
there exists a definable path $\gamma:[a,b]\to Y$ such that
$\gamma(a)=h_1, \gamma(b)=h_2$ and for every $t\in [a,b]$,
$\gamma(t)-h_1\in U$. Consider the set $T$ of all $t\in [a,b]$ such
that $g+(h_1-\gamma(t'))\in X$, for all $t'\leq t$.

 We claim that
$T$ is both open and closed (with respect to the order topology in $M$) in $[a,b]$. Indeed, because $(g+U)\cap
X$ is relatively closed in $g+U$, the set $T$ is closed in $[a,b]$.

To see that it is open, assume that $t_0\in T$. In particular,
$k=g+(h_1-\gamma(t_0))$ is in $X$. For  $t'\in[a,b]$ close to $t_0$
we have $(\gamma(t_0)-\gamma(t'))\in X-k$ (because
$\nu_{\gamma(t_0)}(Y)-\gamma(t_0)\subseteq \nu_k(X)-k=\nu(X)$ is a
subgroup of $G$), therefore $t'\in T$. It follows that $T$ is open and closed in $[a,b]$,
hence $T=[a,b]$ and therefore $g+(h_1-h_2)\in X$.\end{proof}

 The following technical lemma ensures that we can
extend every $G$-linear set and every $G$-subset of a $G$-linear set beyond
 its frontier (where the
frontier of $X$ is $Cl(X)\setminus X$).

\begin{lemma}\label{linear31}
 Assume that $X$ is $G$-linear and  $Y$ is  $G$-subset of $X$. Then
 $Cl(Y)$ is also a $G$-subset of $X$.
\end{lemma}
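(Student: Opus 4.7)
The plan is to verify, for each $h\in Cl(Y)$ and each $g\in X$, that $Cl(Y)-h\sub_0 X-g$; by the $G$-linearity of $X$ (which gives $X-g=_0 X-g'$ for all $g,g'\in X$) it suffices to exhibit one $g\in X$ per $h$. I split on whether $h\in Y$.

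\emph{Case $h\in Y$.} The $G$-subset hypothesis gives $(Y-h)\cap U'\sub X-g$ for some open nbhd $U'$ of $0$. By Lemma \ref{linear1}, $X$ is a $C^0$-manifold near $g$, hence locally closed there; shrink $U'$ to an open $U$ with $(X-g)\cap U$ closed in $U$. Using the identity $Cl(A\cap U)\cap U=Cl(A)\cap U$ valid for open $U$, one gets $(Cl(Y)-h)\cap U=Cl((Y-h)\cap U)\cap U\sub (X-g)\cap U\sub X-g$.

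\emph{Case $h\in Cl(Y)\setminus Y$.} First I construct $g\in X$. Fix $g_0\in X$ and, by Lemma \ref{linear1}, a symmetric open nbhd $U_0$ of $0$ with $(g_0+U_0)\cap X$ relatively closed in $g_0+U_0$. By o-minimal curve selection, choose a definable continuous $\sigma:[0,s^*]\to Cl(Y)$ with $\sigma(s^*)=h$, $\sigma(s)\in Y$ for $s<s^*$, and $\sigma([0,s^*])\sub h_2+U_0$ where $h_2:=\sigma(0)\in Y$. For each $s<s^*$ the initial segment $\sigma|_{[0,s]}$ is a definable path in $Y$ from $h_2$ to $\sigma(s)$ contained in $h_2+U_0$, so $\sigma(s)$ is $U_0$-connected to $h_2$ in $Y$ and Lemma \ref{linear2} yields $g_0+(\sigma(s)-h_2)\in X\cap (g_0+U_0)$. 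Letting $s\to s^*$, the relative closedness of $X\cap(g_0+U_0)$ forces the limit $g:=g_0+(h-h_2)$ to lie in $X$. Using the algebraic identity $y-h+g=y-h_2+g_0$, the goal $(Cl(Y)-h)\cap W\sub X-g$ rewrites as $(Cl(Y)-h_2)\cap(v+W)\sub X-g_0$ with $v:=h-h_2\in U_0$; since the first case applied to $h_2\in Y$ and $g_0$ supplies a nbhd $U_1$ of $0$ with $(Cl(Y)-h_2)\cap U_1\sub X-g_0$, I then pick $W$ small so that $v+W\sub U_1$.

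\emph{Main obstacle.} Arranging $v\in U_1$ is the technical crux, since $U_1$ depends on $h_2$. I plan to extract $U_1$ explicitly by applying Lemma \ref{linear2} to the definably connected component $C$ of $h_2$ in $Y\cap(h_2+U_0)$: this $C$ is relatively open in $Y\cap(h_2+U_0)$ and is mapped by $y\mapsto g_0+(y-h_2)$ into $X$, providing a nbhd $V_*\ni h_2$ with $V_*\cap Y\sub C$ and hence $U_1\supseteq V_*-h_2$. If only one local branch of $Y$ approaches $h$, one can take $V_*=h_2+U_0$, in which case $v\in U_1$ automatically. If several local branches of $Cl(Y)$ approach $h$, this single-$h_2$ argument covers only one of them; the remaining branches are handled by repeating the $g$-construction with a base point $h_2^{(i)}$ in each branch and using the subgroup structure of $\nu(X)$ from Lemma \ref{linear1} (which says $X-g_0$ is locally a subgroup at $0$) to reconcile the various $g^{(i)}$'s, since their pairwise differences are small elements of $X-g_0$ and can therefore be absorbed into it.
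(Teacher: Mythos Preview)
Your Case~1 is correct and is in fact the final closure step of the paper's own argument. The difficulties are all in Case~2.

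\textbf{A sign slip.} In your application of Lemma~\ref{linear2} you place the path $\sigma|_{[0,s]}$ inside $h_2+U_0$, so in the lemma's notation $h_1=h_2$ and $h_2=\sigma(s)$; the conclusion is then $g_0+(h_2-\sigma(s))\in X$, not $g_0+(\sigma(s)-h_2)\in X$. With your sign, the algebraic identity $y-h+g=y-h_2+g_0$ is pleasant but the element $g=g_0+(h-h_2)$ has not been shown to lie in $X$; with the correct sign, $g$ lies in $X$ but the identity collapses. This can be repaired (shrink $\sigma$ so that its whole image has diameter inside $U_0$, hence lies in $\sigma(s)+U_0$ for every $s$), but it needs to be said.

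\textbf{The main obstacle is a genuine gap.} Your reduction to Case~1 requires $v=h-h_2\in U_1$, where $U_1$ is the neighbourhood produced by Case~1 at the point $h_2$. You propose to force $U_1$ large by taking $V_*$ to be the component $C$ of $h_2$ in $Y\cap(h_2+U_0)$; but then $U_1=V_*-h_2$ is only guaranteed to contain $C-h_2$, and there is no reason $h\in V_*$ (indeed $h\notin Y$). The single-branch case is exactly when the open set $V_*$ can be taken to be all of $h_2+U_0$, but in general it cannot. Your multi-branch sketch (``repeat with base points $h_2^{(i)}$ and absorb the differences $g^{(i)}-g^{(j)}$ into $\nu(X)$'') does not work as stated: the differences $g^{(i)}-g^{(j)}=h_2^{(j)}-h_2^{(i)}$ live at scale $U_0$, not infinitesimally, so the subgroup structure of $\nu(X)$ (which is only available infinitesimally, via Lemma~\ref{linear1}) cannot absorb them. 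One can rescue this using $G$-linearity to get $(X-g^{(i)})\cap V_0=(X-g^{(1)})\cap V_0$ for a suitable $V_0$, but this still has to be combined with a proof that every $y\in Cl(Y)$ near $h$ lies in the closure of one of finitely many components whose base points you have already chosen --- none of which is written.

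\textbf{How the paper avoids all of this.} The paper does \emph{not} try to reduce the frontier case to your Case~1. Instead it first proves the weaker statement $\nu_h(Y)-h\subseteq\nu(X)$ for $h\in Cl(Y)\setminus Y$, i.e.\ $g+h'-h\in X$ for every $h'\in Y$ sufficiently close to $h$ and every $g\in X$. This is done by contradiction: a failure produces a curve $\gamma$ in $Y$ approaching $h$ along which $g+\gamma(t)-h\notin X$; Lemma~\ref{linear2} applied along $\gamma$ (with $\gamma(a)$ as base) together with local closedness of $X$ forces $g+\gamma(a)-h\in X$, contradicting the choice of $\gamma$ at $t=a$. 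Once $\nu_h(Y)-h\subseteq\nu(X)$ is known for \emph{all} $h\in Cl(Y)$, your Case~1 closure argument applies uniformly and gives $\nu_h(Cl(Y))-h\subseteq\nu(X)$. The point is to separate ``pass to frontier points of $Y$'' from ``pass from $Y$ to $Cl(Y)$''; your Case~2 attempts both at once, which is what creates the $U_1$--versus--$v$ mismatch.
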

\begin{proof} Fix $h$ in $Cl(Y)\setminus Y$. We first prove that
$\nu_h(Y)-h\sub \nu(X)$. Namely we show: For all $h'\in Y$
sufficiently close to $h$ and every $g\in X$ we have $g+h'-h\in X$.

If the above fails then we have a curve $\gamma:[a,b)\to Y$, with
$\lim_{t\to b}\gamma(t)=h$ such that for all $t\in [a,b)$,
$g+\gamma(t)-h\notin X$. Fix $U$ an open symmetric  neighborhood of
$0$ such that $X\cap (g+U)$ is relatively closed in $g+U$. By
choosing $\gamma(a)$ sufficiently close to $h$, we may assume that
for every $t\in [a,b]$, we have $\gamma(t)\in \gamma(a)+U$ (so in
particular, $h\in \gamma(a)+U$). It follows that for every $t\in
(a,b)$, we have $\gamma(a)$ and $\gamma(t)$ are $U$-connected (as
witnessed by $\gamma$) and therefore, by Lemma \ref{linear2},
$g+\gamma(a)-\gamma(t)\in (g+U)\cap X$. Because $X\cap (g+U)$ is
closed in $g+U$, we may take $t$ to be $b$ and conclude that
$g+\gamma(a)-\gamma(b)=g+\gamma(a)-h\in X$, contradicting our
assumption.

We therefore showed that for all $h\in Cl(Y)\setminus Y$,
$\nu_h(Y)-h\sub \nu(X)$. By our assumption on $Y$ this is true for
every $h\in Cl(Y)$. Because $X$ is locally closed we may take the
closure on the left and conclude that $\nu_h(Cl(Y))-h\sub \nu(X)$
for every $h\in Cl(Y)$, hence $Cl(Y)$ is a $G$-subset of $X$.
\end{proof}

\begin{lemma}\label{linear32}  Let $X\sub G$ be a definable set. Assume that \\ (i) $X_{lin}$
is large in $X$ (i.e. $\dim(X\setminus X_{lin})<\dim(X)$) and \\
(ii) for every $h\in X$ and $g\in X_{lin}$, $X-g\sub_0 X-h.$

 Then
$X$ is $G$-linear.

\end{lemma}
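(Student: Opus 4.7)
My plan proceeds in three stages.

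First, I would show that $X_{lin}$ itself is $G$-linear. Hypothesis (ii) applied with both $g$ and $h$ in $X_{lin}$, and then with their roles exchanged, gives $X-g=_0 X-h$ for every pair $g,h\in X_{lin}$. By Lemma \ref{linear1}, each $\nu_g(X)-g$ with $g\in X_{lin}$ is a subgroup, and by the above equality this subgroup is a fixed group $V\sub G$ independent of $g\in X_{lin}$. Hence $X_{lin}$ is $G$-linear with $\nu(X_{lin})=V$.

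Second, I would establish the inclusion $X\sub Cl(X_{lin})$. (The cases $X=\emptyset$ or $\dim X=0$ are trivial, since then (i) forces $X_{lin}=X$, which is finite and automatically $G$-linear.) Fix $h\in X$ and any $g\in X_{lin}$. Condition (ii) supplies a definable open neighborhood $W$ of $0$ with $(X-g)\cap W\sub (X-h)\cap W$. Since $g$ lies on the $\dim X$-dimensional $C^0$-manifold $X_{lin}$, for $W$ sufficiently small the left-hand side has dimension $\dim X$, so the right-hand side does as well, giving $\dim_h X=\dim X$. Consequently every definable open neighborhood $U$ of $h$ satisfies $\dim(X\cap U)=\dim X$, and by the largeness of $X_{lin}$ in $X$ also $\dim(X_{lin}\cap U)=\dim X>0$. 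In particular $X_{lin}\cap U\neq\emptyset$, placing $h$ in $Cl(X_{lin})$.

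Third, I would apply Lemma \ref{linear31} to the $G$-linear set $X_{lin}$, taking $Y:=X_{lin}$ (trivially a $G$-subset of itself). This yields that $Cl(X_{lin})$ is a $G$-subset of $X_{lin}$, so $\nu_h(Cl(X_{lin}))-h\sub V$ for every $h\in Cl(X_{lin})$. Combined with the second stage, for every $h\in X$ we get $\nu_h(X)-h\sub \nu_h(Cl(X_{lin}))-h\sub V$, and the reverse inclusion $V\sub \nu_h(X)-h$ is just (ii). Hence $\nu_h(X)-h=V$ for every $h\in X$, proving $X$ is $G$-linear.

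The main obstacle is the middle stage: bootstrapping the germ-level containment in (ii) to the genuine topological statement $X\sub Cl(X_{lin})$. The key move is to transport the full local dimension from a point of $X_{lin}$ to an arbitrary $h\in X$ through the neighborhood $W$ provided by (ii); once that is in place, the third stage is almost immediate from Lemma \ref{linear31}.
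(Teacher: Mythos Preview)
Your proposal is correct and follows essentially the same route as the paper's proof: first use (ii) to see that all germs on $X_{lin}$ agree, making $X_{lin}$ a $G$-linear set with associated group $V$; then use (ii) to show every point of $X$ has full local dimension so that $X_{lin}$ is dense in $X$; finally invoke Lemma~\ref{linear31} (with $Y=X_{lin}$) to get $\nu_h(X)-h\subseteq V$ for all $h\in X$, and combine with (ii) for the reverse inclusion. The paper phrases the last step as ``$X$ is a $G$-subset of $X_{lin}$,'' which is exactly your $\nu_h(X)-h\subseteq\nu_h(Cl(X_{lin}))-h\subseteq V$ unpacked.
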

\begin{proof}
 Assume that $\dim(X)=d$. First notice that by (ii), all the
components of $X_{lin}$ are $G$-equivalent to each other and
therefore $X_{lin}$ is itself a $G$-linear set of dimension $d$.

Because of (ii) the local dimension of $X$ at every point is $d$ and
therefore, by (i), $X_{lin}$ is dense in $X$. By Lemma
\ref{linear31}, $X$ is a $G$-subset of $X_{lin}$.

Given any $h\in X$, we have $$\nu_h(X)-h\sub \nu(X_{lin})\sub
\nu_h(X)-h.$$ (the left inclusion follows from the fact that $X$ is
 a $G$-subset of $X_{lin}$ while the right one is just assumption
 (ii)).

It follows that $X$ is locally $G$-linear at $h$ hence $X$ is
$G$-linear.
\end{proof}

Before the next lemma we make a small observation.
\begin{rk}\label{obs1} Let $Z,W\sub G$ be definable sets and let $f:Z\to
W$ be a definable continuous map (all are $0$-definable). Then for
every  $w\in f(Z)$ that is locally generic in $W$ and $z\in
f^{-1}(w)$, we have $\nu_w(W)=f(\nu_z(Z)).$ (By ``locally generic''
we mean that for some open $V\ni w$, we have
$dim(w/\emptyset)=\dim(V\cap W)$).
\end{rk}
\begin{proof}
Fix $V$ as above and let $U\subseteq G$ be a definable open
neighborhood of $z$ such that $w$ is still generic in $V\cap W$ over
the parameters (say $A$) defining $U$. Then $\dim(w/A)=\dim(w)=\dim
f(U\cap Z)$, $w\in f(U\cap Z)$ and therefore $f(U\cap Z)$ and $V\cap
W$ have the same germ at $w$. In particular, $\nu_w(W)=\nu_w(V\cap
W)\subseteq f(U\cap Z)$. Because this is true for every  $U$
neighborhood of $z$, it follows that $\nu_w(W)\subseteq
f(\nu_z(Z))$. The converse follows by continuity.
\end{proof}

\begin{lemma}\label{linear4}
Let  $X$ and $Y$ be two $G$-linear sets. Then:

\noindent (i) $X+Y$ is $G$-linear and we have
$$\nu(X+Y)=\nu(X)+\nu(Y).$$  In
particular, if $X$ and $Y$ are $G$-equivalent $G$-linear sets then
$X+Y$ is $G$-equivalent to them as well.

\noindent (ii) If $X$ and $Y$ are $G$-equivalent then $X\cup Y$ is a
$G$-linear set and $G$-equivalent to $X$ and $Y$.

\end{lemma}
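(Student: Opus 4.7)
For part (i), my plan is to identify $\nu(X+Y)$ at a generic point and then invoke Lemma \ref{linear32} to globalize. Applying Remark \ref{obs1} to the continuous addition map $f\colon X\times Y \to X+Y$ at a generic $z=x+y$ in $X+Y$ yields
\[
\nu_z(X+Y) \;=\; \nu_x(X) + \nu_y(Y), \qquad \text{so} \qquad \nu_z(X+Y) - z \;=\; \nu(X)+\nu(Y).
\]
Since $\nu(X)$ and $\nu(Y)$ are subgroups of $G$ and $G$ is abelian, their sum is also a subgroup; by Lemma \ref{linear1}, every generic $z$ lies in $(X+Y)_{lin}$, so hypothesis (i) of Lemma \ref{linear32} holds.

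For hypothesis (ii) of Lemma \ref{linear32} I need the one-sided germ inclusion $(X+Y)-g \sub_0 (X+Y)-h$ for a generic $g$ and an arbitrary $h = x'+y' \in X+Y$. The $G$-linearity of $X$ and $Y$ ensures that for any $a \in \nu(X)$ and $b \in \nu(Y)$ we have $x'+a \in X$ and $y'+b \in Y$, so $h+(a+b) \in X+Y$; this gives $\nu(X)+\nu(Y) \sub \nu_h(X+Y) - h$, which combined with the generic-point formula is precisely the required inclusion. Lemma \ref{linear32} then yields that $X+Y$ is $G$-linear, and the formula $\nu(X+Y) = \nu(X)+\nu(Y)$ is read off at the generic point $z$. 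The ``in particular'' clause is immediate because $\nu(X)$ is itself a subgroup, so $\nu(X)+\nu(Y)=\nu(X)+\nu(X)=\nu(X)$.

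For part (ii), I will compute $\nu_g(X\cup Y) - g$ directly at each $g \in X \cup Y$ using $\nu_g(X\cup Y) = \nu_g(X) \cup \nu_g(Y)$ and show it always equals $\nu(X) = \nu(Y)$. The only case requiring work is $g \in X \cap Cl(Y)$ (and its mirror image): applying Lemma \ref{linear31} to $Y$ as a trivial $G$-subset of itself gives that $Cl(Y)$ is a $G$-subset of $Y$, so $\nu_g(Y) - g \sub \nu(Y) = \nu(X)$. Combined with $\nu_g(X) - g = \nu(X)$ from the $G$-linearity of $X$, this forces $\nu_g(X\cup Y) - g = \nu(X)$, and the cases where one of the two germs is empty are immediate. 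The main obstacle throughout is the generic-to-global passage in (i), which is exactly the situation Lemma \ref{linear32} was designed to handle; once the one-sided inclusion at arbitrary $h$ is established, everything else is formal.
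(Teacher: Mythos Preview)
Your argument for part (i) is essentially the paper's: both compute $\nu_z(X+Y)-z=\nu(X)+\nu(Y)$ at a generic $z$ via Remark~\ref{obs1}, establish the one-sided inclusion $\nu(X)+\nu(Y)\subseteq \nu_h(X+Y)-h$ at every $h\in X+Y$ (the paper phrases this as ``by continuity''; you spell it out using $G$-linearity of $X$ and $Y$), and then invoke Lemma~\ref{linear32}.

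For part (ii) you take a genuinely different route. The paper observes that away from the lower-dimensional set $Fr(X)\cup Fr(Y)$ the germ of $X\cup Y$ coincides with that of $X$ or of $Y$, so $(X\cup Y)_{lin}$ is large, and then applies Lemma~\ref{linear32} a second time. You instead compute $\nu_g(X\cup Y)-g$ directly at \emph{every} $g\in X\cup Y$, handling the delicate case $g\in X\cap Cl(Y)$ (and its mirror) by applying Lemma~\ref{linear31} to $Y$ viewed as a $G$-subset of itself, which gives $\nu_g(Y)-g\subseteq \nu(Y)=\nu(X)$ and hence $\nu_g(X\cup Y)-g=\nu(X)$. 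This is correct and arguably cleaner: it shows $G$-linearity of $X\cup Y$ outright without passing through Lemma~\ref{linear32}. The paper's approach has the virtue of being uniform with part (i), while yours trades that uniformity for a more self-contained verification at each point.
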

\begin{proof}

(i) First, notice that, by continuity, for any $x\in X $, $y\in Y$,
and $z=x+y$ we have $\nu_x(X)+\nu_y(Y)\subseteq \nu_z(X+Y)$.  It
follows that for all $z\in X+Y$, we have $\nu(X)+\nu(Y) \subseteq
\nu_z(X+Y)-z$.

By  Remark \ref{obs1}, if $z=x+y$ is a generic element of $X+Y$ then
$\nu_z(X+Y)-z=\nu_x(X)+\nu_y(Y)-z=\nu(X)+\nu(Y)$.  Therefore, $X+Y$
is $G$-linear at every generic $z\in X+Y$ and we have
$\nu_z(X+Y)-z=\nu(X)+\nu(Y)$.

It follows that $(X+Y)_{lin}$ is large in $X+Y$ and the germs of $X+Y$ at all points in
$(X+Y)_{lin}$ are $G$-equivalent. Taken together with the above,
we see that $X+Y$ satisfies the assumptions of Lemma
\ref{linear32}(1), hence it  is $G$-linear and we have
$\nu(X+Y)=\nu(X)+\nu(Y)$.

If $X$ and $Y$ are $G$-equivalent then $\nu(X)=\nu(Y)$, hence
$\nu(X)+\nu(Y)=\nu(X)$, so $X+Y$ is $G$-equivalent to both $X$ and
$Y$.

(ii) It is easy to see that for all $z\in X\cup Y$, if $x\notin
Fr(X)\cup Fr(Y)$, then we have $(X\cup Y)-z=_0 X-z$ (if $z\in X$) or
$(X\cup Y)-z=_0 Y-z$ (if $z\in Y$). Because $X$ and $Y$ are
$G$-linear and $G$-equivalent, it follows that $(X\cup Y)_{lin}$ is
large in $X\cup Y$. Also, for every $x\in X$ we clearly have
$X-x\subseteq_0 (X\cup Y)-x$, and similarly for $y\in Y$. We
therefore can apply Lemma \ref{linear32}(1) again and conclude that
$X\cup Y$ is $G$-linear, and $G$-equivalent to $X$ and
$Y$.\end{proof}

\begin{lemma}\label{slinear3}
Let  $X$ be a $G$-linear set and $Y_1, Y_2$ two $G$-subsets of $X$.
Then

 \noindent (i) $Y_1+Y_2$ is a $G$-subset of $X$.

\noindent (ii) $Y_1\cup Y_2$ is a $G$-subset of $X$.

\end{lemma}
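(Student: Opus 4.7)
The plan is to reduce both parts of the lemma to results about $G$-linear sets by enlarging each $Y_i$ to a $G$-linear set that is $G$-equivalent to $X$ and still contains $Y_i$. Fix $g \in X$ and set $Y_i^+ := Y_i + (X - g)$; since $0 = g - g \in X - g$, we have $Y_i \subseteq Y_i^+$. The crux is the intermediate claim that each $Y_i^+$ is $G$-linear with $\nu(Y_i^+) = \nu(X)$.

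To establish this claim I would verify the hypotheses of Lemma \ref{linear32} for $Y_i^+$. Pick $y$ generic in $Y_i$ and $x$ generic in $X$ over $y$, and set $z = y + (x - g)$. Applying Remark \ref{obs1} to the addition map $Y_i \times (X - g) \to Y_i^+$ at such $z$,
\[
\nu_z(Y_i^+) - z = (\nu_y(Y_i) - y) + (\nu_{x-g}(X - g) - (x - g)) \subseteq \nu(X) + \nu(X) = \nu(X),
\]
using that $Y_i$ is a $G$-subset of $X$ and that $\nu(X)$ is a subgroup by Lemma \ref{linear1}; the reverse inclusion $\nu(X) \subseteq \nu_z(Y_i^+) - z$ is immediate by fixing $y$ and varying $x$. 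Hence at generic $z$ the germ equals the subgroup $\nu(X)$, so $(Y_i^+)_{lin}$ is large. Moreover, for \emph{any} $h = y_0 + (x_0 - g) \in Y_i^+$ and any $\xi \in \nu(X)$, the $G$-linearity of $X - g$ gives $x_0 - g + \xi \in X - g$, so $h + \xi \in Y_i^+$, yielding the uniform lower bound $\nu(X) \subseteq \nu_h(Y_i^+) - h$. Since $\dim Y_i^+ = \dim X$ and any subgroup of the germ at a point of $(Y_i^+)_{lin}$ that contains $\nu(X)$ and has dimension at most $\dim X$ must equal $\nu(X)$, hypothesis (ii) of Lemma \ref{linear32} is satisfied. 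Lemma \ref{linear32} then concludes that $Y_i^+$ is $G$-linear with $\nu(Y_i^+) = \nu(X)$.

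With the intermediate claim in hand, both (i) and (ii) follow directly. By Lemma \ref{linear4}(i) and (ii), $Y_1^+ + Y_2^+$ and $Y_1^+ \cup Y_2^+$ are $G$-linear and $G$-equivalent to $X$. Since $Y_1 + Y_2 \subseteq Y_1^+ + Y_2^+$ and $Y_1 \cup Y_2 \subseteq Y_1^+ \cup Y_2^+$, for any point $h$ in the smaller set
\[
\nu_h(Y_1 + Y_2) - h \subseteq \nu_h(Y_1^+ + Y_2^+) - h = \nu(X),
\]
and similarly for the union --- which is precisely the $G$-subset condition. The main obstacle will be the intermediate step that $Y_i^+$ is $G$-linear; specifically, bridging the generic germ calculation to the global statement via the uniform lower bound and the dimension argument needed to apply Lemma \ref{linear32}.
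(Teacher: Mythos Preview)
Your route is genuinely different from the paper's and considerably heavier. The paper does not pass through any auxiliary $G$-linear set: for (i) it observes, via Remark~\ref{obs1}, that at every locally generic $z\in Y_1+Y_2$ one has $\nu_z(Y_1+Y_2)-z \subseteq \nu(X)$, so the $G$-subset condition holds on a dense definable subset; Lemma~\ref{linear31} (the closure of a $G$-subset of $X$ is again a $G$-subset of $X$) then extends this to all of $Y_1+Y_2$. Part (ii) is handled directly, since $\nu_z(Y_1\cup Y_2)-z = (\nu_z(Y_1)-z)\cup(\nu_z(Y_2)-z)\subseteq \nu(X)$ at every point. Your detour through $Y_i^+=Y_i+(X-g)$ and Lemmas~\ref{linear4} and~\ref{linear32} buys nothing extra and creates precisely the obstacle you flag.

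That obstacle is a genuine gap as written. To apply Lemma~\ref{linear32} you must verify its hypothesis (ii) for \emph{every} $z\in (Y_i^+)_{lin}$, which amounts to $\nu_z(Y_i^+)-z\subseteq \nu(X)$ at all such $z$, not just generic ones. Your uniform lower bound $\nu(X)\subseteq \nu_h(Y_i^+)-h$ gives only the reverse containment, and the sentence ``any subgroup of the germ \ldots that contains $\nu(X)$ and has dimension at most $\dim X$ must equal $\nu(X)$'' is not justified: you are asserting that two $C^0$-manifold germs at $0$ of the same dimension, one contained in the other, must coincide. This is true but requires an invariance-of-domain argument for definable manifolds (e.g., Johns' open mapping theorem), which you neither invoke nor prove. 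The paper's direct use of Lemma~\ref{linear31} sidesteps the issue entirely, since the $G$-subset condition is one-sided and no comparison of germs is needed.
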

\begin{proof}

(i) Because $Y_1$ and $Y_2$ are $G$-subsets of $X$, for all $y_1\in
Y_1$, $y_2\in Y_2$, we have
$$(\nu_{y_1}(Y_1)+\nu_{y_2}(Y_2))-(y_1+y_2)=(\nu_{y_1}(Y_1)-y_1)+(\nu_{y_2}(Y_2)-y_2)
\subseteq \nu(X)+\nu(X)=\nu(X).$$

Again as before, if $z=y_1+y_2$ is a locally generic element of
$Y_1+Y_2$ then
$$\nu_z(Y_1+Y_2)-z=\nu_{y_1}(Y_1)+\nu_{y_2}(Y_2)-z\sub \nu(X)$$ and
hence $Y_1+Y_2$ is $G$-subset of $X$ at $z$. We thus have
$(Y_1+Y_2)_{slin}$ dense in $Y_1+Y_2$.
 By Lemma \ref{linear31}, $Y_1+Y_2$ is a $G$-subset
of $X$.

(ii) Here we only need to note that for $z\in Y_1\cap Y_2$, it is
not true in general that the germs of $Y_1$ and $Y_2$ at $z$
coincide. However, it is still true that $(Y_1\cup Y_2)-z\sub_0
X-g$, so we can proceed as before.
\end{proof}

We recall the following definition:
\begin{definition} Given a definable group $G$ in a  sufficiently saturated $\CM$,
a subgroup $\CH$ of $G$ is called {\em locally-definable}, if it
can be written as the directed union of definable sets
$\CH=\bigcup \{X_i:i\in I\}$, where $|I|< \kappa$.

We say that $\CH$ is {\em definably connected} if the $X_i$'s can
all be chosen to be definably connected.
\end{definition}
Such groups were sometimes called $\bigvee$-definable groups (see
\cite{00PPS2}) or Ind-definable groups (see \cite{07HPP}). The
dimension of a locally-definable group is taken to be $max\{\dim
X_i:i\in I\}$. Notice that if $\CH$ is definably connected then it
is actually definably path connected in the sense that any two
points can be connected by a definable path in $\CH$. The following
claim is easy to verify:

 \begin{claim} \label{Glin} If $\CH=\bigcup \{X_i:i\in I\}$ is a
locally-definable group and $g\in \CH$ then there exists $i\in I$
such that $g\in X_i$ and
$$\nu_g(\CH)=g+\nu_0(\CH)=\nu_g(X_i).$$
\end{claim}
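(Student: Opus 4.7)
The proof splits into two equalities.

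For $\nu_g(\CH) = g + \nu_0(\CH)$: translation by $g$ is an $\CM$-definable homeomorphism $\phi_g : G \to G$ with $\phi_g(0) = g$, so it bijects the $\CM$-definable open neighborhoods of $0$ with those of $g$, giving $\nu_g = g + \nu_0$ in $\CN$. Since $g \in \CH$ and $\CH$ is a subgroup, $\CH - g = \CH$, and hence
\[
\nu_g(\CH) = \nu_g \cap \CH = (g + \nu_0) \cap \CH = g + (\nu_0 \cap \CH) = g + \nu_0(\CH).
\]

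For the existence of $i$ with $g \in X_i$ and $\nu_g(X_i) = \nu_g(\CH)$: pick any $i_0 \in I$ with $g \in X_{i_0}$, yielding the easy inclusion $\nu_g(X_{i_0}) \subseteq \nu_g(\CH)$. Observe that the subfamily $\{X_i : g \in X_i\}$ is itself directed with union $\CH$: for any $j$, directedness of the full family gives $k$ with $X_j \cup X_{i_0} \subseteq X_k$, and then $g \in X_k$. The core step is to enlarge $i_0$ to some $i$ with $\nu_g \cap \CH \subseteq X_i$, and for this I invoke the $|M|^+$-saturation of $\CN$ together with $|I| \leq |M|$. For each $j \in I$, consider the partial type over $\CM$
\[
\Sigma_j(y) = \{y \in U : U \text{ is an $\CM$-def.\ open nbhd of } g\} \cup \{y \in X_j\} \cup \{y \notin X_i : g \in X_i\}.
\]
It has size $\leq |M|$ and is inconsistent: a realization $y \in X_j$ would, by directedness combined with $X_{i_0}$, lie in some $X_k$ with $g \in X_k$, contradicting the last block. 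By compactness some finite sub-type is inconsistent, and, absorbing the finitely many $X_i$ occurring into one via directedness, we obtain an $\CM$-definable open $U_j \ni g$ and $i(j)$ with $g \in X_{i(j)}$ such that $U_j \cap X_j \subseteq X_{i(j)}$.

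The main obstacle is passing from these bounds per $j$ to a single index $i^*$ satisfying $\nu_g \cap X_j \subseteq X_{i^*}$ for every $j$. This requires one more use of $|M|^+$-saturation applied to the $\leq |M|$-sized family $\{i(j) : j \in I\}$, again exploiting directedness: if no uniform $i^*$ existed, a cofinal chain in $\{X_i : g \in X_i\}$ could be extracted from the $i(j)$'s, producing, by saturation, a realization that would have to lie in $\CH$ by directedness yet outside every $X_i$ containing $g$, a contradiction. The resulting single $X_{i^*}$ with $g \in X_{i^*}$ absorbs all of $\nu_g \cap \CH$, finishing the proof.
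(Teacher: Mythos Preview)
Your proof of the first equality $\nu_g(\CH)=g+\nu_0(\CH)$ is correct.

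The second part has a genuine gap. After reducing to the existence of a single $i^*$ with $\nu_g\cap X_j\subseteq X_{i^*}$ for every $j$, your final saturation argument does not go through. You assert that a realization of the relevant type ``would have to lie in $\CH$ by directedness''; but nothing in a type of the form $\{y\in U: U\ni g\text{ open}\}\cup\{y\notin X_i:g\in X_i\}$ forces $y\in\CH(\CN)$, so a realization may simply land in $\nu_g\setminus\CH(\CN)$, yielding no contradiction. If instead you add a clause $y\in X_{j_0}$ to pin $y$ inside $\CH$, the type is inconsistent for the trivial reason you already noted in analyzing $\Sigma_{j_0}$, and you recover only the per-$j$ bound, not a uniform one.

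More tellingly, your entire argument for the second equality uses only directedness of $\{X_i\}$ together with saturation; it never invokes the hypothesis that $\CH$ is a \emph{subgroup}. This cannot suffice. For $G=M^2$ and any strictly increasing sequence $(c_n)$ in $M$, the directed family $X_n=\{(x,y):|y|\le c_n|x|\}$ has strictly increasing germs $\nu_0(X_n)$, so no single $X_n$ captures $\nu_0\bigl(\bigcup_n X_n\bigr)$. Of course $\bigcup_n X_n$ is not a group; that is precisely the point --- any valid argument must exploit the group structure.

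The paper gives no proof, but here is one that works (after translating so $g=0$). Pick $i_0$ with $0\in X_{i_0}$ and $\dim X_{i_0}=d:=\dim\CH$, then $i_1$ with $X_{i_0}-X_{i_0}\subseteq X_{i_1}$ (this exists by saturation of $\CM$, since $X_{i_0}-X_{i_0}$ is an $\CM$-definable subset of $\CH$). Given $h\in\nu_0(\CH)$, say $h\in X_j(\CN)$ with (after enlarging $j$) $0\in X_j$, choose $a\in X_{i_0}(\CM)$ generic over the parameters defining $X_{i_0}$ and $X_j$. Then $a\in X_{i_0}+X_j$ and $\dim(a)=d=\dim(X_{i_0}+X_j)$, while $a+h\in\nu_a\cap(X_{i_0}+X_j)(\CN)$. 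By the observation recorded just before Lemma~\ref{linear1} (applied with $X=X_{i_0}+X_j$ and $Y=X_{i_0}$) one gets $a+h\in X_{i_0}(\CN)$, whence $h\in X_{i_0}-a\subseteq X_{i_0}-X_{i_0}\subseteq X_{i_1}$. Since $i_1$ depends only on $i_0$ and not on $h$ or $j$, this gives $\nu_0(\CH)=\nu_0(X_{i_1})$.
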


\begin{lemma} Every locally-definable subgroup $\CH$ of $G$ can be
written as the directed union of $G$-linear sets.
\end{lemma}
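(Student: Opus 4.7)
The plan is to pass from the given cover $\CH=\bigcup_{i\in I} X_i$ to a cover by $G$-linear sets, using Claim \ref{Glin} to produce $G$-linear neighbourhoods pointwise and Lemma \ref{linear4}(ii) to close up under finite unions.

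First I would observe that $\nu_0(\CH)=\nu_0\cap\CH$ is a subgroup of $G$ (being the intersection of two subgroups). Now fix $g\in\CH$. Claim \ref{Glin} supplies an index $i\in I$ with $g\in X_i$ and $\nu_g(X_i)-g=\nu_0(\CH)$; since the right-hand side is a subgroup of $G$, Lemma \ref{linear1} yields that $X_i$ is locally $G$-linear at $g$, witnessed by some open neighbourhood $U\ni g$. I will set $W_g:=U\cap X_i$.

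The one place requiring care is the verification that $W_g$ is itself $G$-linear (not merely locally $G$-linear at $g$), with $\nu(W_g)=\nu_0(\CH)$. This is essentially immediate from the openness of $U$: for any $h\in W_g$, the germ of $W_g$ at $h$ coincides with the germ of $X_i$ at $h$ (a small open neighbourhood of $h$ sits inside $U$), and by choice of $U$ all such germs are $G$-equivalent to the germ of $X_i$ at $g$, which has infinitesimal translate $\nu_0(\CH)$. Hence $W_g\sub\CH$ is $G$-linear with $\nu(W_g)=\nu_0(\CH)$.

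To finish, I would let $\mathcal{F}$ be the family of all definable $G$-linear subsets $W\sub\CH$ satisfying $\nu(W)=\nu_0(\CH)$. The construction above shows that every $g\in\CH$ lies in some $W_g\in\mathcal{F}$, so $\CH=\bigcup_{W\in\mathcal{F}}W$. Moreover, Lemma \ref{linear4}(ii) shows that for any $W_1,W_2\in\mathcal{F}$, which are automatically $G$-equivalent because $\nu(W_1)=\nu(W_2)=\nu_0(\CH)$, the union $W_1\cup W_2$ is again a $G$-linear subset of $\CH$ with $\nu(W_1\cup W_2)=\nu_0(\CH)$, so it lies in $\mathcal{F}$. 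Hence $\mathcal{F}$ is directed under inclusion and $\CH=\bigcup\mathcal{F}$ exhibits $\CH$ as a directed union of $G$-linear sets. The only non-bookkeeping step is the $G$-linearity check for $W_g$; everything else is a direct application of the cited lemmas.
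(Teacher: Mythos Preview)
Your proof is correct, but it follows a different route from the paper's.

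The paper retains the original index set $I$: it replaces each $X_i$ by $X_i':=(X_i)_{lin}^{\max}$, argues (somewhat tersely) that every $X_i'$ is $G$-linear with $\nu(X_i')=\nu_0(\CH)$, and then checks directedness by showing that $X_i'\cup X_j'\subseteq X_k'$ whenever $X_i\cup X_j\subseteq X_k$. Your approach instead produces a small $G$-linear patch $W_g$ around each point and then closes the resulting collection under finite unions via Lemma~\ref{linear4}(ii). This is arguably cleaner: it avoids the slightly delicate point, implicit in the paper's argument, that \emph{all} maximal-dimensional components of $(X_i)_{lin}$ are $G$-equivalent to one another. The trade-off is that the paper's construction keeps the cardinality of the directed system at $|I|$, whereas your family $\mathcal F$ (or even its subfamily generated by the $W_g$) may a priori have size up to $|M|$. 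The lemma as stated imposes no bound on the index set, so this is not a gap; but if one later wants the new directed system to witness local definability in the sense of the paper's definition (i.e.\ with $|I|<\kappa$), the paper's version gives that for free and yours would need an extra reduction.
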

\begin{proof} Without loss of generality all $X_i$'s in the definition of
$\CH$ have maximal dimension $d$. By Claim \ref{Glin}, for every
$g\in \CH$ there exists $i\in I$ such that
$\nu_g(\CH)=\nu_g(X_i)$. For such an $i$ we have:
$g\in(X_i)_{lin}$, the local dimension of $X_i$ at $g$ equals $d$
(hence, $g\in (X_i)_{lin}^{max}$) and all $(X_i)_{lin}^{max}$ are
$G$-linear and $G$-equivalent. If we  let $X_i'=(X_i)_{lin}^{max}$
then we have $\CH=\bigcup_{i\in I}X_i'$.

To see that $\{X_i':i\in I\}$ is a directed system of sets: Given
$i,j\in I$, we take $k\in I$ such that $X_i\cup X_j\sub X_k$ and
claim that $X_i'\cup X_j'\sub X_k'$. Indeed, if $g\in X_i'$ then
$\nu_g(X_i')=\nu_g(\CH)$ and hence $\nu_g(X_i)=\nu_g(X_k)$. In
particular, $g\in (X_k)_{lin}^{max}=X_k'$.\end{proof}

\begin{lemma}\label{linear5}
We assume \M\  is an $\omega$-saturated structure. Let $X$ be  a definable
$G$-linear set, $0\in X$. Then the following hold.

(1) The group $\lan X\ran $ generated
by $X$ is locally-definable and its germ at $0$ equals to
$\nu(X)$. In particular, $\dim(\lan X\ran)=\dim X$.

(2) If $Y$ a $G$-subset of $X$ containing $0$,
 then the group  generated by $Y$ is a locally-definable of dimension $\leq \dim X$,
 whose germ at $0$ is contained in $\nu(X)$.
\end{lemma}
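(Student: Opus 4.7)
The plan is to realize $\langle X\rangle$ as a countable increasing union of $G$-linear sets, each $G$-equivalent to $X$, so that both parts drop out of the germ and dimension calculus already developed in Lemmas \ref{linear1}, \ref{linear4}, \ref{slinear3}, together with Claim \ref{Glin}.

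For part (1), the key observation I would record first is that $\nu(X)=\nu_0(X)$ is a subgroup of $G$ by Lemma \ref{linear1}, since $0\in X$. From this, two things follow: $-X$ is $G$-linear (the identity $X-g=_0 X-h$ for $g,h\in X$ passes to $-X$ under negation of parameters) and $G$-equivalent to $X$, because $\nu(-X)=-\nu(X)=\nu(X)$; and then by Lemma \ref{linear4}(ii), $W:=X\cup(-X)$ is $G$-linear and $G$-equivalent to $X$. I would then set $W_1=W$ and $W_{n+1}=W_n+W$. By Lemma \ref{linear4}(i) and induction, each $W_n$ is $G$-linear with $\nu(W_n)=\nu(X)+\cdots+\nu(X)=\nu(X)$; in particular every $W_n$ is $G$-equivalent to $X$ and of dimension $\dim X$. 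Since $0\in W$, the chain $W_n$ is increasing and its union is exactly the subgroup of $G$ generated by $X$. Thus $\langle X\rangle$ is locally-definable, and Claim \ref{Glin} applied at $g=0$ gives $\nu_0(\langle X\rangle)=\nu_0(W_n)=\nu(X)$ for some (hence any large enough) $n$. Also $\dim\langle X\rangle=\sup_n \dim W_n=\dim X$.

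Part (2) runs in parallel, replacing $G$-linear sets and Lemma \ref{linear4} by $G$-subsets of $X$ and Lemma \ref{slinear3}. I would first check that $-Y$ is a $G$-subset of $X$: for $h\in -Y$,
$$\nu_h(-Y)-h=-(\nu_{-h}(Y)-(-h))\subseteq -\nu(X)=\nu(X),$$
again using that $\nu(X)$ is a subgroup. By Lemma \ref{slinear3}(ii), $V:=Y\cup(-Y)$ is a $G$-subset of $X$, and iterating Lemma \ref{slinear3}(i) with $V_1=V$, $V_{n+1}=V_n+V$ produces an increasing chain of $G$-subsets of $X$ whose union is $\langle Y\rangle$; this yields local-definability. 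Evaluating the $G$-subset relation at $h=0$ yields $\nu_0(V_n)\subseteq\nu(X)$, so by Claim \ref{Glin} the germ of $\langle Y\rangle$ at $0$ is contained in $\nu(X)$. Finally $\dim V_n\leq\dim X$, because any $G$-subset of $X$ is locally contained in a translate of $X$, giving $\dim\langle Y\rangle\leq\dim X$.

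The real content of the argument hinges on a single observation: that $\nu(X)$ is an honest subgroup of $G$. Only with this closure property are the iterated sum-sets $W_n$ forced to have the same germ $\nu(X)$ as $X$ (rather than growing $\nu$-invariants and hence growing dimensions). That fact comes for free from Lemma \ref{linear1} once $0\in X$, and everything after that is bookkeeping in the already-established calculus of $G$-linearity and $G$-subsets.
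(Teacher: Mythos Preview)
Your proof is correct and follows essentially the same route as the paper: both realize $\langle X\rangle$ (resp.\ $\langle Y\rangle$) as a countable increasing union of $G$-linear sets $G$-equivalent to $X$ (resp.\ $G$-subsets of $X$), using Lemmas \ref{linear4} and \ref{slinear3}, and then read off the germ and dimension. The only cosmetic differences are that the paper uses the sequence $X,\,X-X,\,(X-X)+(X-X),\ldots$ rather than your $W_n$ built from $W=X\cup(-X)$, and that the paper spells out the $\omega$-saturation argument directly to identify $\nu_0(\langle X\rangle)$ with $\nu_0(X_n)$, whereas you invoke Claim \ref{Glin}; both are equivalent here.
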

\begin{proof}
(1) The group $\lan X\ran$ generated by $X$ is a countable
increasing union of the sets $X_0=X$, $X_1=X-X$, $X_2=(X-X)+(X-X),
\ldots$, so locally-definable. By the Lemma \ref{linear4}, each
$X_n$ is $G$-linear and $G$-equivalent to $X$ (so in particular, has
the same dimension as $X$). Given $g\in \lan X\ran $, there exists
$n\geq 0$ such that $g\in X_n$. Because of the $G$-linearity, for
every $k\geq n$, we have $X_n=_g X_k$. Because of saturation, there
exists a neighborhood $U$ of $g$ such that $U\cap \lan X\ran =U\cap
X_n$. It follows that the germ of $\lan X\ran$ at this point equals
to that of $X_n$ and in particular, $\lan X\ran_0=\nu(X)$.

(2) The group $\lan Y\ran $ generated by $Y$ is a countable
increasing union of the sets $Y_0=Y$, $Y_1=Y-Y$, $Y_2=(Y-Y)+(Y-Y),
\ldots$. Because $-Y$ is also a $G$-subset of $X$, we can apply
Lemma \ref{slinear3} and conclude that each $Y_n$ is a $G$-subset of
$X$ whose germ at $0$ is contained in $\nu(X)$. It follows that the
dimension of $\lan Y\ran$ is at most $\dim X$ and that the germ of
$\lan Y\ran$ at $0$ is contained in that of $\lan X\ran$.\end{proof}

We end this section with a small observation on locally-definable
subgroups:

\begin{lemma}\label{linear5.5} Let $\CH_1,\CH_2$ be locally-definable subgroups of $G$. Then $\CH_1+\CH_2$ is a locally-definable group whose germ at $0$ equals to the sum of the germs
of $\CH_1$ and $\CH_2$ at $0$.
\end{lemma}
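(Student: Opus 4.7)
The plan is to mimic the argument of Lemma \ref{linear5}(1), using Lemma \ref{linear4}(i) to combine the two locally-definable groups. Specifically, I want to present $\CH_1+\CH_2$ as a directed union of mutually $G$-equivalent $G$-linear sets, all with germ $\nu_0(\CH_1)+\nu_0(\CH_2)$ at $0$, and then extract the germ of $\CH_1+\CH_2$ at $0$ by a saturation argument.

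First, I apply the previous lemma to write $\CH_1=\bigcup_{i\in I}X_i'$ and $\CH_2=\bigcup_{j\in J}Y_j'$ as directed unions of $G$-linear sets. Inspection of the proof of that lemma, together with Claim \ref{Glin}, shows that for every $g\in X_i'$ one has $\nu_g(X_i')=\nu_g(\CH_1)=g+\nu_0(\CH_1)$, hence $\nu(X_i')=\nu_0(\CH_1)$ for every $i$. Consequently the $X_i'$ are pairwise $G$-equivalent, and similarly the $Y_j'$ are pairwise $G$-equivalent with common germ $\nu_0(\CH_2)$.

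Because $G$ is abelian, $\CH_1+\CH_2$ is a subgroup of $G$, and it clearly equals $\bigcup_{(i,j)\in I\times J}(X_i'+Y_j')$, where $I\times J$ carries the product directed order. By Lemma \ref{linear4}(i), each sum $X_i'+Y_j'$ is itself a $G$-linear set with $\nu(X_i'+Y_j')=\nu(X_i')+\nu(Y_j')=\nu_0(\CH_1)+\nu_0(\CH_2)$, so these sums are all pairwise $G$-equivalent. Since $|I\times J|<\kappa$, this already presents $\CH_1+\CH_2$ as locally-definable.

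It remains to compute the germ at $0$. I would fix $(i_0,j_0)$ with $0\in X_{i_0}'\cap Y_{j_0}'$ (possible by directedness, since $0\in\CH_1\cap\CH_2$); then $0\in X_{i_0}'+Y_{j_0}'$, and for every $(i,j)\geq(i_0,j_0)$ the set $X_i'+Y_j'$ is $G$-equivalent to $X_{i_0}'+Y_{j_0}'$ and hence agrees with it on some definable open neighborhood of $0$. The argument of Lemma \ref{linear5}(1) then yields, via $\kappa$-saturation, a single open neighborhood $U$ of $0$ with $U\cap(\CH_1+\CH_2)=U\cap(X_{i_0}'+Y_{j_0}')$, and taking germs at $0$ of both sides gives $\nu_0(\CH_1+\CH_2)=\nu_0(\CH_1)+\nu_0(\CH_2)$. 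The only real obstacle is this last step: one must know that $\CM$ is sufficiently saturated to handle the product index set $I\times J$ all at once, but this is built into the standing assumption that $\CH_1$ and $\CH_2$ are locally-definable subgroups.
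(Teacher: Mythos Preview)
Your proof is correct and follows essentially the same route as the paper: write $\CH_1,\CH_2$ as directed unions of $G$-linear sets, apply Lemma~\ref{linear4}(i) to obtain that the $X_i'+Y_j'$ are $G$-linear with $\nu(X_i'+Y_j')=\nu_0(\CH_1)+\nu_0(\CH_2)$, and then invoke the saturation argument from Lemma~\ref{linear5}(1) to identify the germ of $\CH_1+\CH_2$ at $0$. Your write-up is simply more explicit than the paper's; the saturation concern you raise at the end is handled exactly by the standing hypothesis $|I|,|J|<\kappa$ in the definition of locally-definable, and in fact one can also sidestep it by computing $\nu_0(\CH_1+\CH_2)$ directly in the saturated extension $\CN$ (any $h\in\nu_0\cap(\CH_1+\CH_2)$ already lies in some $X_i'+Y_j'$ containing $0$).
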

\begin{proof} Let
$$\CH_1=\bigcup_{i\in I}X_i\,\, ;\,\, \CH_2=\bigcup_{j\in
J}Y_j.$$

As was pointed out above, we may assume that the $X_i$'s $Y_j$'s
are all $G$-linear. By Lemma \ref{linear4}(1), the sets $X_i+Y_j$
are all $G$-linear and $\nu(X_i+Y_j)=\nu(X_i)+\nu(Y_j)$. By Lemma
\ref{linear5}(1),
$$\nu_0(\CH_1+\CH_2)=\nu(X_i+Y_j)=\nu(X_i)+\nu(Y_j)=\nu_0(\CH_1)+\nu_0(\CH_2).$$
\end{proof}

\section{Definable sets containing abstract subgroups}
In this section \M\  will denote an $\omega$-saturated 
 o-minimal expansion of an ordered group.

\begin{thm}\label{abstgroups}
 Let $G$ be a definable abelian group, $\Gamma \subseteq  G$ an abstract
subgroup (i.e., $\Gamma$ is not necessarily definable). Let
$X\subseteq G$ be a definable set containing $\Gamma$ of minimal
dimension $d$. Then there exist a definable set $X'$ of dimension
$d$, a definably connected locally-definable subgroup $\CH$ of
dimension $d$, and $g_1,\ldots, g_k\in \Gamma$ such that
$\Gamma\subseteq X'\sub \bigcup_{i=1}^k \CH+g_i$.
\end{thm}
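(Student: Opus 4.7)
The plan is to combine the minimality of $\dim X$ with the subgroup structure of $\Gamma$ in order to apply Corollary~\ref{sum2} and extract $\CH$ from the $G$-linear part of $X$. The key initial observation, and the only direct use of the subgroup structure of $\Gamma$, is that for every $g\in\Gamma$ one has $g+\Gamma=\Gamma\subseteq X$, so $(g+X)\cap X\supseteq\Gamma$ and by minimality $\dim((g+X)\cap X)=d$. Hence $\Gamma$ is contained in the definable set $X^{*}:=\{g\in X:\dim((g+X)\cap X)=d\}$, which by minimality again has dimension $d$.

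Now Corollary~\ref{sum2} is applicable with $X^{*}$ playing the role of Cor's $X$, the original $X$ playing the roles of both $Y$ and $Z$, and $X^{*}$ itself as the large subset $X_{0}\sub X^{*}$ (trivially large in itself): the hypothesis ``$\dim((g+X)\cap X)=d$ for every $g\in X^{*}$'' is exactly the defining property of $X^{*}$. The conclusion is that $(X^{*})_{lin}$ is large in $X^{*}$, so $(X^{*})_{lin}^{\max}$ is a definable set of dimension $d$ that is large in $X^{*}$. Minimality of $d$ then forbids $\Gamma$ from lying inside the definable set $X^{*}\setminus(X^{*})_{lin}^{\max}$ of dimension $<d$, so there exists $g_{0}\in\Gamma\cap(X^{*})_{lin}^{\max}$; translating by $-g_{0}$ we may assume $0$ lies in some definably connected component $C$ of $(X^{*})_{lin}^{\max}$. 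By Lemma~\ref{linear5}(1), $\CH:=\langle C\rangle$ is a definably connected locally-definable subgroup of dimension $d$ with $\nu_{0}(\CH)=\nu(C)$.

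It remains to cover $\Gamma$ by finitely many cosets of $\CH$ and to produce the definable set $X'$. The set $(X^{*})_{lin}^{\max}$ has only finitely many definably connected components, each $G$-linear; each is contained in a single coset of the locally-definable subgroup generated by its $G$-equivalence class. To ensure that all these subgroups coincide with $\CH$, so that a single $d$-dimensional $\CH$ suffices, one exploits the $\Gamma$-translation structure inside $X^{*}$ together with minimality: namely, one argues that any two $G$-equivalence classes that both meet $\Gamma$ must have equal germs at their respective base points. Choosing representatives $g_{i}\in\Gamma$ from the $\CH$-cosets meeting $\Gamma$, and treating the $\Gamma$-elements in the definable remainder $X^{*}\setminus(X^{*})_{lin}^{\max}$ of dimension $<d$ by induction on $d$ (the case $d=0$ being trivial), one obtains a finite list $g_{1},\ldots,g_{k}\in\Gamma$ with $\Gamma\subseteq\bigcup_{i}(\CH+g_{i})$. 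Finally, writing $\CH=\bigcup_{n}Y_{n}$ as a directed union of definable sets, the definable subset $X^{*}\cap(\CH+g_{i})$ of the locally-definable set $\CH+g_{i}$ is, by $\omega$-saturation, contained in some stage $Y_{n_{i}}+g_{i}$, and $X':=\bigcup_{i=1}^{k}(Y_{n_{i}}+g_{i})$ is the required definable set.

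The main obstacle I expect is the step ensuring that all $G$-equivalence classes of components of $(X^{*})_{lin}^{\max}$ yield the \emph{same} subgroup $\CH$, so that the covering uses cosets of a single $d$-dimensional locally-definable subgroup rather than a sum of several possibly of dimension $>d$. This requires a careful interplay between the group structure of $\Gamma$ and the minimality of $d$; the remainder of the argument is a direct application of the $G$-linear machinery of Section~2.
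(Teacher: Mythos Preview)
Your opening—defining $X^{*}=\{g\in X:\dim((g+X)\cap X)=d\}$ and applying Corollary~\ref{sum2} to obtain $(X^{*})_{lin}$ large in $X^{*}$—is correct and is exactly how the paper begins. The genuine gap is in handling the remainder $R:=X^{*}\setminus(X^{*})_{lin}^{\max}$. Your proposed ``induction on $d$'' cannot get started: the inductive hypothesis concerns an abstract \emph{subgroup} contained in a definable set of minimal dimension, but $\Gamma\cap R$ is not a subgroup, and there is no reason for the subgroup it generates to lie in any definable set of dimension $<d$. The paper does not use induction here. After first replacing $X$ by $X^{*}$ and then further minimising (among all definable $X\supseteq\Gamma$ with $X_{lin}$ large) the number of $d$-dimensional components of $X_{lin}$—this minimisation forces each such component to contain a $d$-dimensional set of $\Gamma$-points, which is exactly the input needed to resolve your ``main obstacle'' (Claim~1, that all top components are $G$-equivalent)—the paper shows (Claim~2) that the \emph{entire} set $X$ is a $G$-subset of $X_{lin}^{\max}$. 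Then for each component $X^{j}$ of the remainder one picks $g^{j}\in\Gamma\cap X^{j}$; by Lemma~\ref{linear5}(2) the group $\CH''$ generated by $\bigcup_{j}(X^{j}-g^{j})$ has germ at $0$ contained in $\nu(X_{lin}^{\max})$, so $\CH:=\CH'+\CH''$ still has dimension $d$ by Lemma~\ref{linear5.5}. Here $\CH'$ is generated by the union of \emph{all} translated top components, not just a single $C$ as you propose; this automatically places each top component inside a single $\CH'$-coset with no further argument. The upshot is that all of $X$, not merely $\Gamma$, lies in finitely many $\CH$-cosets.

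Covering all of $X$ also dissolves your $X'$ step, which as written is incorrect: $X^{*}\cap(\CH+g_{i})$ is the intersection of a definable set with a merely locally-definable one and need not itself be definable, so $\omega$-saturation does not force it into a single stage $Y_{n_{i}}+g_{i}$. In the paper one simply takes $X'$ to be the final modified $X$, and no saturation argument is needed.
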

\begin{proof} Assume the nontrivial case $d>0$. 
 For every $g\in \Gamma$, the set $X\cap (g+X)$
contains $\Gamma$ and, hence by minimality, has dimension $d$. It
follows that the set $Y=\{g\in X: \dim(X\cap X+g)=d\}$ contains
$\Gamma$ and therefore, again by minimality, has dimension $d$.

We now apply Lemma \ref{sum2} to $Y,Y, X$ and $X$ (for $X_0,X,Y$ and
$Z$, respectively), and conclude that $Y_{lin}$ is large in $Y$.

Since $Y$ contains $\Gamma$ we may replace $X$ with $Y$ and assume
from now on that $X_{lin}$ is large in $X$ (however, it need not
be the case that $X$ or even $X_{lin}$ is $G$-linear). Moreover,
we pick $X\supseteq \Gamma$ such that $X_{lin}$ has the minimal
number of definably connected components of dimension $d$.  Note
that each component has infinitely many elements of $\Gamma$
(otherwise, we can replace it by finitely many points).

\noindent{\bf Claim 1} All the components  of $X_{lin}$ of
dimension $d$ are $G$-equivalent to each other.

\noindent{\bf Proof} Indeed, if $X_1, X_2$ are two such components
then for $g\in X_1\cap \Gamma$, the set $\{h\in X_2:g+h\in
X\}=X_2\cap X-g$ contains $X_2\cap \Gamma$ and therefore has
dimension $d$ (otherwise, we could replace $X_2$ by a definable
set of smaller dimension). For the same reason, the set of all
$g\in X_1$ such that $\dim(X_2\cap (X-g))=d$ has dimension $d$.
Just like in the proof of Corollary \ref{sum2}, we can apply
Lemma \ref{sum1} locally to $X_1$ and $X_2$, and conclude that for
some open $U$ and $V$ we have $U\cap X_1$ is $G$-equivalent to
$V\cap X_2$. Because $X_1$ and $X_2$ are $G$-linear it follows
that they are $G$-equivalent to each other. {\bf End of Claim 1}.
\\

We therefore showed that $X_{lin}^{max}$ is $G$-linear.
 It is left to handle $X^*=X\setminus X^{max}_{lin}$.
Fix one of the components $X_0$ of $X_{lin}^{max}$.

As before, for every $g\in \Gamma\cap X^*$, the set $\{h\in
X_0:g+h\in X\}$ contains $X_0\cap \Gamma$ and hence has dimension
$d$. Therefore, after possibly replacing $X^*$ by a smaller set we
may assume that for all $g\in X^*$ the set $(g+X_0)\cap X$ has
dimension $d$.

\noindent{\bf Claim 2} $X$ is a $G$-subset of $X_{lin}^{max}$.

\noindent {\bf Proof} By abuse of notation we let $\nu(X)$ be the
infinitesimal subgroup associated to the $G$-linear set
$X_{lin}^{max}$. By Lemma \ref{linear31}, it is enough to see that a
dense subset of $X$ is $G$-linear in $X_{lin}^{max}$. Namely, we
will show that for every locally generic $g\in X$, we have
$\nu_g(X)-g\sub \nu(X)$. It is clearly sufficient to consider $g\in
X^*$.

We fix an open set $U\sub G$ and $g\in X^*$ which is generic in
$U\cap X$. By our assumption on $X^*$, there exists $h$ generic in
$X_0$ over $g$ such that $g+h$ (generic) in $X$. As in the proof of
Lemma \ref{sum1}, it follows that $\nu_g(X)+h \sub \nu_{g+h}(X)$,
hence $$\nu_g(X)-g\sub \nu_{g+h}(X)-(g+h)=\nu(X).$$ (the right-most
equality follows from the fact that  $g+h$ is generic in $X$ and
hence belongs to $X_{lin}^{max}$). {\bf End of Claim 2.}

Let $X^{max}_{lin}= X_1\cup\cdots\cup X_r$ be the union of those
components of $X_{lin}$ of dimension $d$. By Lemma \ref{linear0}
each $X_i$ is $G$-linear.
 For each $X_i$, pick $g_i\in \Gamma\cap X_i$, and consider the
 set $X_i'=X_i-g_i$. By Claim 1, the union $X'=\bigcup_{i=1}^r X_i'$
is $G$-linear (and $\nu(X')=\nu(X_i)$ for every $i$). It also
contains $0$ and it is definably connected. Let $\CH'$ be the subgroup
of $G$ generated by $X'$. By Lemma \ref{linear5}(1), $\dim \CH'=\dim
X'=d$. We thus have
$$X^{max}_{lin}\sub \bigcup_{i=1}^r \CH'+g_i.$$

Let $X^1,\ldots, X^t$ be the definably connected components of
$X^*$. For every such  $X^j$, we may assume that $X^j\cap \Gamma\neq
\emptyset$ (for otherwise we may omit this component), take $g^j\in
\Gamma\cap X^j$, and let $X''=\bigcup_{i=1}^t X^j-g^j$. By Claim 2,
each $X^j$ (and therefore also $X^j-g^j$) is a $G$-subset of
$X^d_{lin}$. By Lemma \ref{slinear3} (2),  the set $X''$ is also a
$G$-subset of $X^{max}_{lin}$. It is also definably connected and
contains $0$.

 Let $\CH''$ be the subgroup of $G$ generated by $X''$. By Lemma
 \ref{linear5}(2), the germ of $\CH''$ at $0$ is contained in that
 of $X'$ and therefore of $\CH'$. We now let $\CH=\CH'+\CH''$. By
 Lemma \ref{linear5.5}, we have $\nu_0(\CH) =\nu_0(\CH'+\CH'')=\nu_0(\CH')$,
 so in particular the dimension of $\CH$ is $d$.

Putting the above facts together we obtain:
$$\Gamma\sub X\sub \bigcup_{i=1}^k\CH+g_i,$$ for some $g_1,\ldots,
g_k\in \Gamma$.

With that we end the proof of the theorem.\end{proof}

\begin{cor}\label{ctor}
Let $G$ be a definably connected abelian group. Let
$\Gamma$  be a divisible subgroup of the subgroup of torsion points $Tor(G)$. Let $X\subset G$ be a
definable set containing $\Gamma$. Then, there is a definably
connected locally-definable
 $\CH$ subgroup of $G$  with $dim\,\CH\leq dim\,X$ such
 that $\Gamma\subset \CH$.
\end{cor}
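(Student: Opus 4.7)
The plan is to apply Theorem \ref{abstgroups} directly, then exploit divisibility to collapse the finite union of cosets down to a single subgroup.

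First, I would pass to a definable set of minimal dimension. Let $d$ be the minimum of $\dim X_0$ over all definable sets $X_0 \supseteq \Gamma$. Since $X$ is one such, $d\leq\dim X$. Fix $X_0\supseteq \Gamma$ realising this minimum. If $d=0$, then $X_0$ is finite, so $\Gamma$ is finite, and being divisible it must be trivial; take $\CH=\{0\}$. Otherwise, apply Theorem \ref{abstgroups} to $X_0$ to obtain a definable set $X'$ of dimension $d$, a definably connected locally-definable subgroup $\CH\leq G$ with $\dim\CH=d\leq\dim X$, and elements $g_1,\ldots,g_k\in\Gamma$ such that
$$\Gamma\;\subseteq\; X'\;\subseteq\;\bigcup_{i=1}^k(\CH+g_i).$$

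Second, I would argue that $\Gamma\subseteq\CH$ using only the algebraic structure of $\Gamma$. Since $G$ is abelian, $\CH$ is a normal (locally-definable) subgroup, and the image of $\Gamma$ under the quotient map $G\to G/\CH$ lands in the set $\{g_1+\CH,\ldots,g_k+\CH\}$, which has at most $k$ elements. Therefore the abelian group
$$\Gamma/(\Gamma\cap\CH)\;\cong\;(\Gamma+\CH)/\CH$$
is finite. On the other hand, as a quotient of the divisible group $\Gamma$, this group is divisible. A finite divisible abelian group is trivial, so $\Gamma=\Gamma\cap\CH$, i.e.\ $\Gamma\subseteq\CH$, as required.

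The main obstacle is hardly an obstacle at all once Theorem \ref{abstgroups} is in hand: the essential content is already packaged in that theorem, and the corollary is precisely the point at which the divisibility hypothesis on $\Gamma$ is brought to bear. The only potentially delicate issue is ensuring that $\CH$ has dimension bounded by $\dim X$, which is handled by the reduction to the minimal-dimensional $X_0$ before invoking the theorem. Note also that the assumption $\Gamma\subseteq\mathrm{Tor}(G)$ is not used beyond the fact that $\Gamma$ is divisible; it is recorded in the statement because torsion points are the application of interest.
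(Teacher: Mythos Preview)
Your proof is correct and follows the same strategy as the paper: reduce to a definable set of minimal dimension, apply Theorem \ref{abstgroups}, then use divisibility of $\Gamma$ to collapse the finite union of $\CH$-cosets. The only difference lies in this last step: the paper sets $m=\mathrm{lcm}(o(g_1),\ldots,o(g_k))$ (using that each $g_i\in\Gamma\subseteq Tor(G)$ has finite order) and, given $g\in\Gamma$, writes $g=mh$ with $h\in\Gamma$, so that $h=h'+g_i$ for some $h'\in\CH$ yields $g=mh'\in\CH$; you instead argue abstractly that $(\Gamma+\CH)/\CH$ is finite and divisible, hence trivial. Your formulation is slightly cleaner and, as you correctly observe, does not actually need the hypothesis $\Gamma\subseteq Tor(G)$, whereas the paper's argument does invoke it to guarantee the $g_i$ have finite order.
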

\begin{proof}
By  Theorem \ref{abstgroups}, taking $X$ of minimal dimension, there
is a definably connected locally-definable  group $\CH$ subgroup of
$G$, with $dim\,\CH=dim\,X$, and $g_1,\dots, g_k\in \Gamma$ such
that $\Gamma\subseteq \bigcup_{i=1}^k\CH+g_i$.
 Then $\Gamma\subset \CH$. Indeed,  let $g\in \Gamma$ and let
 $m= lcm(o(g_1),\dots,o(g_k))$, since $\Gamma$ is divisible
 there is $h\in\Gamma$ such that $g=mh$.  For such $h$  there is
 an $i$ such that $h= h'+g_i$, for some $h'\in \CH$. Then
 $g=mh=mh'\in\CH$.
\end{proof}

\section{The main result: commutative case}

In this section \M\ will be an  $\omega$-saturated  o-minimal
expansion of a real closed field and $G$ will denote  a definably compact definably
connected abelian group of dimension $n$. Such $G$ is divisible and hence the subgroup 
$Tor(G)$ and
 any $p$-Sylow of $G$ are
 also divisible  (the $p$-Sylow of $G$  is   $G_p=\bigcup_{n>0}G[p^n]$,
 where $G[m]=\{ g\in G\colon mg=0_G\}$).

By Theorem 1.1 in  \cite {04EO}  we have $G[m]\cong(\Z/m\Z)^n$ for any $m>0$, and $\pi_1(G)\cong \Z^n$, where  $\pi_1(G)$ is the {\em o}-minimal fundamental group of $G$.

Let $p$ be a prime number and let $x_1,\dots,x_n\in G[p]$  we say that $x_1,\dots,x_n$ are {\em $n$ independent $p$-torsion points} if they  are \gp-independent under the isomorphism $G[p]\cong\gp^n$.
 
\begin{thm}\label{large}
Let $X$ be a definable subset of $G$.
If $X$ contains a  $p$-Sylow of $G$, then $dim\,X=n$. \end{thm}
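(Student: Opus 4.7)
My plan is to apply Corollary~\ref{ctor} to reduce to a locally-definable subgroup containing $G_p$, and then to use the $o$-minimal universal cover of $G$ together with the structure theorem $\pi_1(G)\cong\Z^n$ from~\cite{04EO} to force the dimension to be $n$ by counting $p$-power torsion. Since $G$ is divisible, so is the $p$-Sylow $G_p=\bigcup_{k>0}G[p^k]$, which is therefore a divisible subgroup of $\operatorname{Tor}(G)$ contained in $X$. Applying Corollary~\ref{ctor} with $\Gamma=G_p$ gives a definably connected locally-definable subgroup $\CH\subseteq G$ of dimension $d:=\dim\CH\le\dim X$ with $G_p\subseteq\CH$; it therefore suffices to prove $d\ge n$.

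Let $q\colon\tilde G\to G$ be the $o$-minimal universal cover, a locally-definable simply-connected divisible torsion-free abelian group of dimension $n$ with $\ker q=\pi_1(G)\cong\Z^n$. Using definable connectedness of $\CH$, lift definable paths in $\CH$ starting at $0$ to obtain a definably connected, simply-connected locally-definable subgroup $V\subseteq\tilde G$ of dimension $d$ (the image of the universal cover of $\CH$) such that $q$ restricts to a surjection $V\twoheadrightarrow\CH$ with kernel $V\cap\pi_1(G)$. For each $g\in G[p^k]\subseteq\CH$ and any lift $\tilde g\in V$, one has $p^k\tilde g\in\pi_1(G)$, so $\tilde g\in V\cap p^{-k}\pi_1(G)$, and distinct elements of $G[p^k]$ give distinct cosets modulo $V\cap\pi_1(G)$. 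Therefore
\[
p^{kn}=|G[p^k]|\le \bigl|(V\cap p^{-k}\pi_1(G))/(V\cap\pi_1(G))\bigr|.
\]
The discrete subgroup $V\cap\pi_1(G)$ of the simply-connected $d$-dimensional abelian ``$R$-vector group'' $V$ has rank at most $d$, so the right-hand side has at most $p^{kd}$ elements, whence $n\le d$, and $\dim X=n$.

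The main obstacle lies in the structural claim used at the end: a simply-connected definably connected locally-definable abelian subgroup of the universal cover $\tilde G$ behaves like an $R$-vector subspace of $\tilde G$, so that its discrete subgroups have rank at most its dimension. This should follow from the $o$-minimal covering theory developed in~\cite{04EO}, or directly from a classification showing that every such simply-connected group of dimension $d$ is isomorphic to the additive group of $R^d$. Once that structural fact is available, the remainder is routine bookkeeping with the covering map.
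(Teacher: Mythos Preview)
Your reduction via Corollary~\ref{ctor} to a definably connected locally-definable $\CH\supseteq G_p$ with $\dim\CH\le\dim X$ is exactly how the paper begins. The gap is in the second half, and it is more serious than you indicate. Even granting the classification you invoke---that a simply-connected $d$-dimensional locally-definable abelian group is isomorphic to $(R^d,+)$---the rank bound does \emph{not} follow: in a saturated real closed field $R$ the one-dimensional group $(R,+)$ already contains the rank-two discrete subgroup $\Z+\Z\xi$ for any positive infinite $\xi\in R$ (the interval $(-\tfrac12,\tfrac12)$ meets it only at $0$). So ``a discrete subgroup of $R^d$ has rank at most $d$'' is simply false over non-archimedean real closed fields, and your counting inequality $p^{kn}\le p^{kd}$ has no justification as stated. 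What you would actually need is a bound on the rank of $\pi_1(\CH)$ for locally-definable abelian $\CH$, or equivalently a bound $|\CH[p^k]|\le p^{k\dim\CH}$; neither is in \cite{04EO}, and establishing either is essentially the content of the theorem you are trying to prove.

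The paper sidesteps all of this with a direct algebraic-topological argument inside $G$. After obtaining $\CH$, it chooses $n$ independent $p$-torsion points $x_1,\dots,x_n\in G[p]\subseteq\CH$, connects each to $0$ by a path $\tau_i$ lying in $\CH$, and shows (Lemma~\ref{indep}) that the loops $t\mapsto p\,\tau_i(t)$ are $\Z$-linearly independent in $\pi_1(G)$. A degree computation using the cohomology ring structure of $G$ (Lemma~\ref{summap}) then forces the sum map $(t_1,\dots,t_n)\mapsto\sum_i p\,\tau_i(t_i)$ to be surjective onto $G$; since the image of this map is contained in the subgroup $\CH$, one gets $\CH=G$ outright (Corollary~\ref{locdef}), hence $\dim X\ge\dim\CH=n$. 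This uses only $\pi_1(G)\cong\Z^n$ and the structure of $H^*(G;\Q)$ from \cite{04EO}, not any classification of simply-connected covers.
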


\begin{lem}\label{indep} Let $p$ be a prime number and let $x_1,\dots,x_n\in G$ be
$n$ independent $p$-torsion points. For each $i=1,\dots,n$ let $\tau_i$ be a path in
  $G$ from $0_G$ to $x_i$, and let $p\,\tau_i$ denote the loop at $0_G$, $t\mapsto p\,\tau_i(t)$.
   Then,  $[p\,\tau_1], \dots,[p\,\tau_n]\in\pi_1(G)$ are \Z-linearly independent.
\end{lem}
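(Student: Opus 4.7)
My plan is to use the multiplication-by-$p$ map $[p]\colon G\to G$ as an (o-minimal) covering, and to read off $\pi_1(G)/p\,\pi_1(G)$ from its fiber. Since $G$ is definably compact and divisible, $[p]$ is a surjective definable homomorphism whose kernel $G[p]\cong\gp^n$ acts as the deck transformation group by translation. The o-minimal covering-space theory of \cite{04EO} then yields an ``endpoint-of-lift'' homomorphism
$$\varphi\colon\pi_1(G)\longrightarrow G[p],\qquad [\gamma]\longmapsto\tilde\gamma(1),$$
where $\tilde\gamma$ is the unique lift of $\gamma$ through $[p]$ starting at $0_G$. Because $G$ is a topological group, concatenation and pointwise addition of loops are homotopic, so $[p]_*$ acts on $\pi_1(G)\cong\Z^n$ as multiplication by $p$, and $\varphi$ fits in a short exact sequence
$$0\to\pi_1(G)\xrightarrow{\times p}\pi_1(G)\xrightarrow{\varphi}G[p]\to 0,$$
identifying $\pi_1(G)/p\,\pi_1(G)$ with $G[p]\cong\gp^n$.

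Next I would compute $\varphi([p\,\tau_i])$ directly: the path $\tau_i$ itself is a lift of the loop $p\,\tau_i$ through $[p]$ starting at $0_G$, since tautologically $[p]\circ\tau_i=p\,\tau_i$. Therefore $\varphi([p\,\tau_i])=\tau_i(1)=x_i$.

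The conclusion is then routine linear algebra. Since $x_1,\dots,x_n$ are \gp-independent in $G[p]$, the classes $[p\,\tau_1],\dots,[p\,\tau_n]\in\pi_1(G)$ project to an \gp-basis of $\pi_1(G)/p\,\pi_1(G)\cong\gp^n$. Writing them in any $\Z$-basis of $\pi_1(G)\cong\Z^n$ as columns of an $n\times n$ integer matrix $M$, the reduction $M\bmod p$ is invertible, so $\det M\not\equiv 0\pmod p$ and in particular $\det M\neq 0$. Hence $[p\,\tau_1],\dots,[p\,\tau_n]$ are \Z-linearly independent. The only real technical point is checking the covering-space package in the o-minimal setting---that $[p]$ is a definable covering with deck group $G[p]$, that $[p]_*$ acts as $\times p$ on $\pi_1$, and that $\varphi$ is a well-defined surjection with kernel $p\,\pi_1(G)$---but all three are standard consequences of the machinery in \cite{04EO}, so I expect no substantive obstacle.
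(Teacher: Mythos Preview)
Your proof is correct and uses essentially the same approach as the paper: both rely on the endpoint-of-lift homomorphism $\varphi\colon\pi_1(G)\to G[p]$ coming from the covering $[p]\colon G\to G$ (citing \cite{04EO}), and both compute $\varphi([p\,\tau_i])=x_i$ by observing that $\tau_i$ itself is the lift of $p\,\tau_i$. The only difference is cosmetic: the paper deduces \Z-independence by the one-line contradiction ``a primitive relation $\sum m_i[p\,\tau_i]=0$ maps under $\varphi$ to $\sum m_i x_i=0$, forcing $p\mid\gcd(m_i)$,'' while you set up the full exact sequence and phrase the same conclusion via the determinant of the coordinate matrix mod $p$; the extra machinery (surjectivity of $\varphi$, identification of $\ker\varphi$ with $p\,\pi_1(G)$) is not actually needed for the argument.
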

\begin{proof} Let $\varphi\colon \pi_1(G)\to G[p]$ the homomorphism $[\gamma]\mapsto
\tilde{\gamma}(1)$, where $\tilde{\gamma}$ is the unique path in $G$
starting at $0_G$ and such that $p\tilde{\gamma}=\gamma$ (see the proofs of propositions
 2.10 and 2.11 in \cite {04EO}). Suppose
$[p\tau_1], \dots,[p\tau_n]$ are
 \Z-linearly dependent and let
$m_1[p\tau_1]+ \cdots+ m_n[p\tau_n]=[k_{0_G}]$  with $(m_1,\dots,m_n)=1$, where $k_{0_G}$ is the constant loop at $0_G$. Applying
 $\varphi$
 to this equality we get $m_1x_1+\cdots+m_nx_n=0_G$ and hence $p|(m_1,\dots,m_n)$,
 a contradiction.
\end{proof}
\begin{lem}\label{summap}
Let $[\gamma_1],\dots,[\gamma_n]\in \pi_1(G)$ be \Z-linearly
independent. Then,
$$\left\{\sum_{i=1}^n \gamma_i(t_i)\colon t_i\in[0,1), 1\leq i\leq n\right\} =G.$$
\end{lem}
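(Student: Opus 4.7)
My plan is to descend $F$ to the $n$-torus and to compare the resulting map with an explicit surjective covering, concluding via topological degree. Since each $\gamma_i$ is a loop based at $0_G$, the value of $\sum\gamma_i(t_i)$ is unchanged whenever a single $t_i$ is swapped between $0$ and $1$, so $F$ factors through a continuous definable map $\bar F\colon \T^n\to G$ from the definable $n$-torus, and $F([0,1)^n)=\bar F(\T^n)$. It therefore suffices to show that $\bar F$ is surjective.

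The induced homomorphism $\bar F_\ast\colon \pi_1(\T^n)\to \pi_1(G)$ sends the $i$-th standard generator (the coordinate loop with all other entries fixed at $0$) to $[\gamma_i]$, since the remaining contributions are $\gamma_j(0)=0_G$. Under the identifications $\pi_1(\T^n)\cong \Z^n$ and $\pi_1(G)\cong \Z^n$ (the latter from \cite{04EO}), the hypothesis makes $\bar F_\ast$ a $\Z$-linear map whose matrix has nonzero determinant $d$. To convert this into surjectivity I would build an explicit comparison map: lift each $\gamma_i$ to a path $\tilde\gamma_i$ in the universal cover $\pi\colon \tilde G\to G$ starting at $0$ and ending at $v_i\in\pi_1(G)$, and define the group homomorphism $\phi\colon \T^n\to G,\ [t]\mapsto \pi\bigl(\sum t_iv_i\bigr)$. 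The linear independence ensures that $\ker \phi=\{[t]:\sum t_iv_i\in \pi_1(G)\}$ has finite order $[\pi_1(G):\langle v_1,\dots,v_n\rangle]=|d|$, whence $\phi$ is a finite covering of $G$, hence surjective of degree $\pm d$.

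Now $\bar F$ and $\phi$ induce the same map on $\pi_1$, so invoking the o-minimal algebraic topology of \cite{04EO} (both $\T^n$ and $G$ are definably compact definably connected orientable $n$-manifolds whose cohomology rings are exterior algebras on $H^1$, so the top map is the determinant of the map on $\pi_1$) they have the same topological degree $\pm d\ne 0$. The standard observation that any continuous definable map of nonzero degree between definably compact definably connected orientable $n$-manifolds is surjective (a missed point would yield a factorization through a space of vanishing top homology, forcing degree zero) then gives $\bar F$ surjective, as required. The principal obstacle is legitimizing this degree-from-$\pi_1$ identification in the o-minimal/real-closed-field setting, but this is precisely the sort of foundational input supplied by \cite{04EO}.
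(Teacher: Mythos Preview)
Your core argument is the paper's: factor through the definable $n$-torus, note that the induced map on $\pi_1$ sends the standard generators to the $[\gamma_i]$ and hence (after Hurewicz and tensoring with $\Q$) gives an isomorphism on $H_1(-;\Q)$, then use that the $\Q$-cohomology rings of both spaces are exterior algebras on $H^1$ to conclude the degree is nonzero and therefore the map is onto. The paper runs exactly this, phrasing the middle step as ``injective with finite cokernel on $\pi_1$ $\Rightarrow$ iso on $H_1(-;\Q)$ $\Rightarrow$ iso on $H^1$ $\Rightarrow$ iso on $H^n$''.

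Your detour through the comparison homomorphism $\phi$ is both unnecessary and the one shaky spot. It is unnecessary because your own parenthetical (``the top map is the determinant of the map on $\pi_1$'') already computes $\deg\bar F$ directly, without reference to $\phi$. It is shaky because over an arbitrary real closed field the universal cover $\tilde G$ is only a locally-definable group, and writing $\pi(\sum t_i v_i)$ presupposes a scalar multiplication of elements of $\pi_1(G)\subset\tilde G$ by field elements $t_i$, which is not given for free and would itself require justification. Drop $\phi$ and your proof coincides with the paper's.
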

\begin{proof}
Consider the definably compact definably
 connected $n$-dimensional abelian group $\T=[0,1)^n$ and the definable
  map
$$ \begin{array}{ rccc}
      f\colon&\T&\longrightarrow&G   \\
      &(t_1,\dots,t_n)& \mapsto &  \sum_{i=1}^n \gamma_i(t_i).
\end{array} $$
It suffices to prove that $f$ is onto.  Since $f$ is continuous with respect
 the manifold topology of \T, we can see $f$ as a definable continuous map
  between definable manifolds.

\noindent{\bf Claim} It suffices to prove that $f$ induces an isomorphism
$f_*\colon H_1(\T;\Q)\to H_1(G;\Q)$.

\noindent{\bf Proof}  The map $f$ has a degree for any orientations of \T\
and $G$, since both \T\ and $G$ have dimension $n$ (see section 4 in \cite{04EO}).
 To prove that $f$ is onto it suffices to prove that one (equivalently each one) of
 these degrees is not zero. Let  $\zeta_G$ and $\zeta_{\T}$ be  the fundamental classes
  of some given orientations of $G$ and $\T$, i.e., $\zeta_G$ (respectively
   $\zeta_{\T}$) is a generator of $H_n(G)\cong \Z$ (resp. of  $H_n(\T) \cong \Z$).
   The degree of $f$ is defined by  $f_*(\zeta_{\T})=deg(f)\zeta_G$. If $\omega_G$
   and $\omega_{\T}$ are the corresponding cohomology classes by duality we have
   $f^*(\omega_G)= deg(f)\omega_{\T}$. Hence to prove that $deg(f)\not=0$ suffices
   to prove that $f$ induces an isomorphism of the \Q-vector spaces $f^*\colon H^n(G;\Q)\to H^n(\T;\Q)$.
   Now $f^*\colon H^*(G;\Q)\to H^*(\T;\Q)$ is also \Q-algebra homomorphism and by Theorem 1.1
    in \cite{04EO}, both $\Q$-algebras are   generated  by  elements of degree one.
     So  it suffices to prove that $f$ induces an isomorphism $f^*\colon H^1(G;\Q)\to H^1(\T;\Q)$.
     Applying duality again we have the required result. End of Claim.

Now, let $\delta_i\colon[0,1]\to \T\colon
t\mapsto\delta_i(t)=(0,\dots,\stackrel{i}{1},\dots,0)$. The map $f$ induces a map
 $f_*\colon \pi_1(\T)\to\pi_1(G)\colon [\delta_i]\mapsto  f_*([\delta_i])=[\gamma_i]$,
 for each $i=1,\dots,n$, which is one to one  and has finite cokernel  ($= \pi_1(G)/Im(f_*)$)
  because  we have $n$  \Z-linear independent  $[\gamma_i]$'s. Identifying the $\pi_1$'s
   with the $H_1$'s via the Hurewicz isomorphism, we have  the following exact sequence
$$ 0\to H_1(\T)\stackrel{f_*}{\to}H_1(G)\to H_1(G)/Im(f_*)\to 0.$$
tensoring with \Q\ we obtain that $f_*\colon H_1(\T;\Q)\to H_1(G;\Q)$ is an isomorphism.
\end{proof}
\begin{cor}\label{locdef}
Let $\CH$ be a definably connected locally-definable subgroup of
$G$. Suppose $\CH$ contains $G[p]$, for some prime $p$. Then
$\CH=G$.
\end{cor}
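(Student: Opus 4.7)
The plan is to realize every element of $G$ as a finite sum of values of suitable loops that live inside $\CH$, using the two lemmas (\ref{indep} and \ref{summap}) that have just been proved.

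First, since $G[p]\cong \gp^n$ by Theorem 1.1 of \cite{04EO}, I fix $n$ independent $p$-torsion points $x_1,\dots,x_n\in G[p]$. By assumption, $x_1,\dots,x_n\in \CH$. Because $\CH$ is a definably connected locally-definable subgroup of $G$, it is definably path connected, so for each $i=1,\dots,n$ I can choose a definable path $\tau_i$ inside $\CH$ from $0_G$ to $x_i$.

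Next, since $\CH$ is a subgroup of $G$, for every $t$ the element $p\,\tau_i(t)$ lies in $\CH$; moreover, $p\,\tau_i(0)=0_G=p\,\tau_i(1)$, so $p\,\tau_i$ is a loop at $0_G$ both in $\CH$ and in $G$. Applying Lemma \ref{indep} to the $x_i$'s and the $\tau_i$'s, the classes $[p\,\tau_1],\dots,[p\,\tau_n]\in\pi_1(G)$ are $\Z$-linearly independent.

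Now I invoke Lemma \ref{summap} with $\gamma_i=p\,\tau_i$: every $g\in G$ can be written as
\[
g=\sum_{i=1}^n p\,\tau_i(t_i)
\]
for some $t_1,\dots,t_n\in[0,1)$. Each summand $p\,\tau_i(t_i)$ belongs to $\CH$, and $\CH$ is closed under finite sums, so $g\in\CH$. Hence $G\subseteq\CH$, which gives $\CH=G$.

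There is no real obstacle here beyond checking that the two ingredients apply: the paths $\tau_i$ exist because definable connectedness of a locally-definable group upgrades to definable path-connectedness, and the loops $p\,\tau_i$ stay inside $\CH$ because $\CH$ is a subgroup. Once those are in place, Lemmas \ref{indep} and \ref{summap} immediately force $\CH$ to exhaust $G$.
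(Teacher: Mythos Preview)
Your proof is correct and follows essentially the same route as the paper's: choose independent $p$-torsion points, use definable path-connectedness of $\CH$ to obtain paths $\tau_i$ inside $\CH$, observe that the loops $p\,\tau_i$ stay in $\CH$, apply Lemma~\ref{indep} for $\Z$-independence, and then Lemma~\ref{summap} to cover $G$. The paper's argument is identical in structure and content.
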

\begin{proof}Let $x_1,\dots,x_n$ be $n$ independent $p$-torsion points and
  for each $i=1,\dots,n$ let $\tau_i$ be a path in $\CH$ from $0_G$ to $x_i$.
   Let $\gamma_i$ denote the loop at $0_G$ defined by $t\mapsto p\tau_i(t)$,  since
   $\CH$ is a group $\gamma_i([0,1])\subset \CH$. By Lemma \ref{indep}
   $[\gamma_1], \dots,[\gamma_n]\in\pi_1(G)$ are \Z-linearly independent.

By Lemma \ref{summap}, $S=\left\{\sum_{i=1}^n \gamma_i(t_i)\colon
t_i\in[0,1),
 1\leq i\leq n\right\} =G$. Again since $\CH$ is a group $S\subset \CH$.
\end{proof}

\begin{proof}[Proof of Theorem \ref{large}]
Let $X\sub G$ be a definable set containing  a $p$-Sylow $G_p$ of $G$.
 Since $G_p$ is divisible, by Corollary
\ref{ctor}, there exists a definably connected locally-definable
group $\CH$ of dimension $\leq dim\,X$  containing $G_p$. Since
$\G_p$ contains $G[p]$, we can apply
 Corollary \ref{locdef} to get $\CH=G$ and hence $dim\,X=dim\,G$
\end{proof}

\begin{cor}\label{ncor} Let $X$ be a definable subset of $G$. If $X$ is large in $G$ then for any prime $p$ and any $l$ there is an $m>l$ and a $g\in X$ of order $p^m$.
 \end{cor}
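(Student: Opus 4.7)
My plan is to argue by contradiction, reducing the statement to Theorem \ref{large}. Suppose the conclusion fails. Then there exist a prime $p$ and an integer $l$ such that no $g \in X$ has order $p^m$ for any $m>l$. In other words, every element of $X$ whose order is a power of $p$ must already satisfy $p^l g = 0_G$, so
$$X \cap G_p \subseteq G[p^l], \qquad \text{equivalently} \qquad G_p \setminus G[p^l] \subseteq G \setminus X.$$

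The next step is to package this into a definable set that contains the full $p$-Sylow $G_p$ yet has dimension strictly less than $n$. The natural candidate is
$$Y \;=\; (G \setminus X) \cup G[p^l].$$
This is definable since $G[p^l]$ is a finite (hence definable) subgroup — recall from \cite{04EO} that $G[p^l] \cong (\Z/p^l\Z)^n$. I would then verify the two needed properties. First, $G_p \subseteq Y$: any $g \in G_p$ either lies in $G[p^l] \subseteq Y$, or has order $p^m$ with $m > l$, and then $g \in G_p \setminus G[p^l] \subseteq G \setminus X \subseteq Y$. Second, $\dim Y < n$: since $X$ is large, $\dim(G \setminus X) < n$, and adjoining the finite set $G[p^l]$ does not raise the dimension.

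Applying Theorem \ref{large} to $Y$ then forces $\dim Y = n$, contradicting the bound $\dim Y < n$ just established, which completes the argument. The main (and really only) point of care is choosing the right definable enlargement of $G \setminus X$ so that it swallows all of $G_p$ without increasing the dimension; once this bookkeeping is in place the corollary is an immediate consequence of Theorem \ref{large}, so I do not anticipate a substantive obstacle.
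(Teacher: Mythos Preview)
Your argument is correct and essentially identical to the paper's: the paper sets $X'=X\setminus G[p^l]$ and applies Theorem~\ref{large} to $G\setminus X'$, which is precisely your $Y=(G\setminus X)\cup G[p^l]$. The only difference is cosmetic---you build $Y$ directly rather than as the complement of $X'$.
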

 \begin{proof} Note that this corollary is equivalent to Theorem \ref{large}.  Indeed, for the nontrivial case suppose there is a prime $p$ and an $l$ such that $X\cap G_p\subset G[p^l]$. So the set $X'=X\setminus G[p^l]$ is still large in $G$ and $X'\cap G_p=\emptyset$. But then $G\setminus X'$ contains $G_p$ and $\dim(G\setminus X')<\dim G$, a contradiction with the theorem.
 \end{proof}

\section{The main result: general  case}
We work in an $\omega$-saturated structure  $\CM$ which is an o-minimal expansion of a real closed field $\CM$.
 We will use multiplicative notation for groups, and denote
  by $p_k$ the map $x\mapsto x^k$. A definably connected group 
   which is either abelian or definably compact is divisible (see \cite{06B}
    or \cite{05E1}), hence  the map $p_k$ is onto. Recall that a {\em generic subset}
     $Y$ of a definable abelian group $G$ is a definable subset of $G$ such that
      finitely many translates of $Y$ cover $G$.
\begin{lem}\label{covT}
Let $T$ be  a definable  abelian group. Suppose that for every $Y\sub T$ generic,
 $Y\cap Tor(T)\not=\emptyset$. Then, for each  $Y\sub T$ generic there is a $k\in\Na$
  such that $p_k(Y)=T$.
\end{lem}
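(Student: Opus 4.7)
The plan is to reduce the statement to a single $k$-th-power map via a covering argument. First I would observe that the class of generic subsets of $T$ is closed under inversion and under translation, so for every $g\in T$ the set $gY^{-1}$ is again generic. The hypothesis then produces $y\in Y$ and $t\in Tor(T)$ with $gy^{-1}=t$, equivalently $g=ty$, giving the pointwise covering $T = Tor(T)\cdot Y$.

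The main obstacle is then to replace this abstract covering by torsion elements of possibly unbounded order by a definable covering of the form $T[k]\cdot Y$ for a single $k$. Writing $Tor(T)=\bigcup_{n\in\Na}T[n!]$, each $T[n!]=\ker p_{n!}$ is definable over the parameters defining $T$, and the sets $T[n!]\cdot Y$ form an increasing chain because $n!\mid (n+1)!$\,. If none of them exhausted $T$, the complements $T\setminus (T[n!]\cdot Y)$ would form a descending family of nonempty definable sets over the finite parameter set used to define $T$ and $Y$, yielding a finitely consistent type with no realisation in $\CM$ and contradicting $\omega$-saturation. Hence there is $k=n!$ with $T=T[k]\cdot Y$.

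Once a single $k$ is extracted, the rest is an algebraic collapse. Because $T$ is abelian and $t^k=1$ for every $t\in T[k]$, applying $p_k$ term by term gives
$$p_k(T)\;=\;p_k\bigl(T[k]\cdot Y\bigr)\;=\;\bigcup_{t\in T[k]}t^k Y^k\;=\;Y^k\;=\;p_k(Y).$$
Invoking the fact recalled at the start of the section that a definably connected abelian group is divisible, so that $p_k\colon T\to T$ is onto, one concludes $p_k(Y)=T$. The only step I expect to be genuinely delicate is the saturation argument in the second paragraph, since the chain $\{T[n!]\cdot Y\}_n$ is countable and it is essential that $T$ and $Y$ be defined over \emph{finitely} many parameters in order for $\omega$-saturation to suffice.
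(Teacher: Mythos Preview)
Your proof is correct and follows essentially the same route as the paper's: both establish $T=Tor(T)\cdot Y$ via the observation that each $hY^{-1}$ is generic and hence meets $Tor(T)$, then pass by a compactness/saturation argument to a single bounded-torsion cover, and finish by applying $p_k$ and divisibility. The only difference is cosmetic: the paper phrases the saturation step as ``by compactness there exist finitely many $g_1,\dots,g_l\in Tor(T)$ with $T=\bigcup g_iY$'' and sets $k=\mathrm{lcm}(o(g_i))$, whereas you organise it as an increasing countable chain $T[n!]\cdot Y$ and invoke $\omega$-saturation directly---which is in fact the precise justification behind the paper's one-word ``compactness'', so your worry in the last sentence is exactly on point.
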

\begin{proof}
First note that for each  $Y\sub T$ generic, there are $g_1,\dots,g_l\in Tor(T)$
 such that $T=\bigcup_{i=1}^ng_iY$.
Indeed,  if  not,  by compactness, there is $h\not\in Tor(T)Y$,
i.e., $hY^{-1}\cap Tor(T)=\emptyset$,
 but $hY^{-1}$ is also generic, a contradiction.
 Now taking $k=lcm(o(g_1),\dots,o(g_l))$ we have $p_k(Y)=p_k(T)=T$ as required.
\end{proof}

We recall that  a maximal definably connected abelian subgroup $T$
of a definably connected  definably compact group $G$ is called a
{\em maximal torus of $G$} and that $G=\bigcup_{g\in G}T^g$ (see
Theorem 6.12 in \cite{06B} or \cite{05E1}).

\begin{lem}\label{covG}
Let $G$ be  a definably compact definably connected  group. Let $T$
be a maximal torus of $G$ and $X\sub G$ is definable. Assume the
following:

(i) For every $g\in G$ and for every   $Y\sub T^g$ generic, $Y\cap
Tor(T^g)\not=\emptyset$, and

(ii) For every $g\in G$, $X\cap T^g$ is generic in $T^g$.

 Then there is a
$k\in\Na$ such that $p_k(X)=G$.
\end{lem}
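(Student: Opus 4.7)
The plan is to apply Lemma \ref{covT} to each maximal torus $T^g$ separately and then glue the resulting exponents via $\omega$-saturation of $\CM$.

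First I would fix $g\in G$ and observe that assumption (ii) makes $Y:=X\cap T^g$ generic in $T^g$, while assumption (i) is precisely the hypothesis of Lemma \ref{covT} for the definable abelian group $T^g$ in place of $T$. So Lemma \ref{covT} gives some integer $k(g)\in\Na$ with $p_{k(g)}(X\cap T^g)=T^g$, and in particular $T^g\subseteq p_{k(g)}(X)$.

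Next I would promote this pointwise statement to a uniform $k$ by a saturation argument. For each $k\in\Na$ consider the set
$$B_k=\{g\in G: T^g\subseteq p_k(X)\},$$
which is definable because $T^g$ is uniformly definable in $g$. Using that $T^g$ is divisible (being definably connected and abelian), so $p_m(T^g)=T^g$ for every $m\geq 1$, a short unpacking shows $B_k\subseteq B_{mk}$: if $t\in T^g$ we write $t=p_m(s)$ with $s\in T^g\subseteq p_k(X)$, then $s=p_k(y)$ for some $y\in X$, whence $t=p_{mk}(y)\in p_{mk}(X)$. Hence the subfamily $\{B_{n!}:n\in\Na\}$ is increasing, and by the previous paragraph $\bigcup_{n}B_{n!}=G$. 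I would then apply $\omega$-saturation to the partial type $\pi(x)=\{x\in G\}\cup\{x\notin B_{n!}:n\in\Na\}$, which lives over the finite parameter set defining $X$, $G$ and $T$; nonrealization of $\pi$ forces, by directedness, a single $B_{N!}$ to equal $G$. Setting $k=N!$, every $T^g$ lies in $p_k(X)$, and using the decomposition $G=\bigcup_{g\in G}T^g$ cited above the lemma I conclude $p_k(X)=G$.

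The only real obstacle is the saturation step, and it is essentially bookkeeping: I need to confirm that the $B_k$ form a directed family of uniformly definable sets over a finite parameter set, so that the standard compactness-style collapse of a countable directed cover to a single piece applies. Passing to the $n!$-indexed subfamily is the convenient way to make directedness into genuine increasingness.
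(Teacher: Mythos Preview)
Your argument is correct and follows essentially the same route as the paper: apply Lemma~\ref{covT} inside each $T^g$ to get a pointwise exponent $k(g)$, observe that any multiple of $k(g)$ still works by divisibility of $T^g$, and then use compactness/$\omega$-saturation to pass to a uniform $k$, finishing with $G=\bigcup_{g}T^g$. The only cosmetic difference is that the paper phrases the uniformization as ``by compactness'' applied directly to the condition $p_k(X\cap T^g)=T^g$, whereas you package it via the increasing chain $\{B_{n!}\}$ and the weaker condition $T^g\subseteq p_k(X)$; either formulation yields $p_k(X)=G$.
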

\begin{proof} Given $g\in G$, by (ii), we can apply
 Lemma \ref{covT}, for $Y=X\cap T^g$, to get
 a $k(g)\in \Na$ such that the map $y\mapsto y^{k(g)}$ sends $X\cap T^g$ onto $T^g$
(note that by divisibility of $T^g$  any multiple of $k(g)$ has the same property).
 Then,  by compactness, there is $k\in\Na$ such that for every $g\in G$,
the map $y\mapsto y^k$ sends $X\cap T^g$ onto $T^g$.
 To finish the proof  suffices to make use of  the equality $G=\bigcup_{g\in G}T^g$.
\end{proof}

For the next lemma we will use the notions of {\em compact domination} and
{\em very good reduction} introduced in  \cite{07HPP}. Recall that  a definably
simple group has very good reduction and that a definably compact   group which has
very good reduction has also compact domination (see Theorem 10.7 in \cite{07HPP}).

\begin{lem}\label{vgr}
Let $G$ be  a definably compact definably connected  group. Let $X\sub G$ be a definable set such that $Tor(G)\sub X$. If $G$ has very
 good reduction then there
  is $k\in\Na$ such that $p_k(X)=G$.

  Moreover, if $N$ is a finite (central) normal subgroup of $G$ and 
  $G/N$ has very good reduction  then there is $k\in\Na$ such that
  $p_k(X)=G$.
\end{lem}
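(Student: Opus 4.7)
The plan is to verify the two hypotheses of Lemma \ref{covG} for $X$ and some fixed maximal torus $T$ of $G$, which will immediately yield the desired $k$. Condition (i) of Lemma \ref{covG}, that every generic subset $Y$ of each conjugate torus $T^g$ meets $Tor(T^g)$, is where the very good reduction hypothesis enters: by Theorem 10.7 of \cite{07HPP}, very good reduction implies compact domination on $G$, and since $T^g$ is a definably compact abelian subgroup (hence itself \fsg\ and covered by the compact-domination results for abelian definably compact groups from \cite{07HPP}), every generic definable subset of $T^g$ must contain a torsion point.

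Condition (ii) of Lemma \ref{covG}, that $X \cap T^g$ is generic in $T^g$ for every $g \in G$, then follows from (i) together with the \fsg\ property. Since $T^g$ is definably compact and hence \fsg, any definable subset of $T^g$ is either generic or has generic complement. If $X \cap T^g$ were not generic, then $T^g \setminus X$ would be generic and, by (i), would contain an element of $Tor(T^g) \sub Tor(G) \sub X$, a contradiction. Thus both hypotheses of Lemma \ref{covG} are satisfied, and there exists $k \in \Na$ with $p_k(X) = G$.

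For the moreover part, let $\pi\colon G \to G/N$ be the quotient. Since $G$ is divisible and $N$ is a finite central normal subgroup, one checks that $Tor(G/N) = \pi(Tor(G)) \sub \pi(X)$. Applying the first part to the definably compact, definably connected group $G/N$ (which by assumption has very good reduction) and the definable set $\pi(X)$ produces some $k \in \Na$ with $p_k(\pi(X)) = G/N$, equivalently $p_k(X) \cdot N = G$ in $G$. Raising to the $|N|$-th power and using that $N$ is central --- so that $(xn)^{|N|} = x^{|N|} n^{|N|} = x^{|N|}$ for $x \in p_k(X)$ and $n \in N$ (with $n^{|N|} = 1$ by Lagrange) --- together with divisibility $p_{|N|}(G) = G$, yields $p_{k|N|}(X) = p_{|N|}(p_k(X) \cdot N) = p_{|N|}(G) = G$.

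The main obstacle I expect is step (i): verifying that the torsion-in-generics conclusion is in fact available on each abelian subgroup $T^g$, not merely on $G$ itself. This will likely require either a direct transfer argument using the standard part map from $G$ to its associated real Lie group supplied by very good reduction, combined with density of torsion in the real compact torus corresponding to $T^g$, or a separate appeal to compact domination for abelian definably compact groups. Once (i) is secured, the remaining steps are a straightforward assembly of tools already developed in the paper.
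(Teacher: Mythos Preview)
Your overall architecture is the same as the paper's: verify (i) and (ii) of Lemma~\ref{covG} and then invoke it, and handle the ``moreover'' clause via $\pi:G\to G/N$ and replacing $k$ by $k|N|$. The argument for (ii) and for the moreover part matches the paper essentially verbatim.

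The one substantive point is exactly the one you flag as the obstacle. In the body of your argument you write that $T^g$, being definably compact abelian, is ``covered by the compact-domination results for abelian definably compact groups from \cite{07HPP}''. At the time of this paper that was \emph{not} available in general: compact domination in \cite{07HPP} is established only for groups with very good reduction, and compact domination of $G$ does not by itself descend to an arbitrary definable subgroup. The paper resolves this precisely by your first suggested fix: since $G$ has very good reduction one may assume $G$ is defined over $\R$, choose a \emph{real} maximal torus $T_0\leq G(\R)$, and set $T=T_0(\CM)$. Then $T$, and hence every conjugate $T^g$, is defined over $\R$, so each $T^g$ has very good reduction and is therefore compactly dominated; Proposition~10.6 of \cite{07HPP} then gives (i). Your second suggested route (a standalone compact-domination theorem for abelian definably compact groups) would be circular here, since that is essentially what the paper is working toward in the abelian case.

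So: your plan is correct and coincides with the paper's, provided you commit to the ``define $T$ over the reals'' step rather than an unproved general appeal.
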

\begin{proof}  The result can probably be read off
standard Lie theory. However, since we could not find a reference we
give a complete proof.

Without lost of generality we may assume that $G=G(M)$ is defined
over the reals. By our assumptions on $G$, $G(\R)$ is a compact
connected Lie group. Let
 $T_0$ be a standard maximal torus of $G(\R)$, hence $T_0$ is also a Lie group and hence definable
  over the reals. Let $T=T_0(\CM)$. So  $T$ is still a maximal torus of $G$ and it has very
  good reduction and hence it  is  compactly dominated. The same happens for every conjugate
   $T^g$ of $T$, for $g\in G$. Since all $T^g$'s are also abelian, we can apply
    Proposition 10.6 in  \cite{07HPP}, to get that  for every $g\in G$, every generic subset
     of $T^g$ has a torsion point, so condition (i) of Lemma \ref{covG} is satisfied.

      Because $Tor(G)\sub X$,  the set  $T^g\setminus  X$ has no
      torsion and hence, by (i), it is non-generic. It follows that $T^g\cap X$
      is generic and therefore (ii) holds.   Now
       we can apply Lemma \ref{covG} to get the required result.

       For the moreover clause, let $\pi:G\to G/N$ be the projection
       map. Because $N$ is finite, all torsion elements of $G/N$ are
       in $\pi(X)$, hence by what we have just proved, there is $k$
       such that $p_k(\pi(X))=G/N$. If we take $k'=k\cdot|N|$ then 
       $p_{k'}(X)=G$ (because  $N$ is central and $G$ divisible).
\end{proof}

Notice, in the setting of the above lemma, that if $p_k(X)=G$ then
in particular, $\dim X=\dim G$ (since the image of $X$ under a
definable map cannot increase).

 We can now
prove the main result:
\begin{thm}\label{general} Let $G$ be a definably compact group
and assume that $X\sub G$ is a definable set containing all torsion
points of $G$. Then $\dim X=\dim G$.
\end{thm}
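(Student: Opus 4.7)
The plan is to reduce, via the structure theory of definably compact groups, to the case where $G$ is a direct product of a definably compact abelian group and definably simple factors, and then combine Theorem \ref{large} (the abelian main result) with Lemma \ref{vgr} (the very-good-reduction case) through the o-minimal fibre dimension theorem.

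First I would replace $G$ by its identity component $G^0$: it has finite index, the same dimension, and still contains all of its own torsion inside $X$, so we may assume $G$ is definably connected. By the structure theorem for definably compact definably connected groups (as developed in \cite{07HPP} and related work), $G$ is definably isogenous to a product $\tilde G := A\times S_1\times\cdots\times S_k$, where $A$ is a definably compact definably connected abelian group and each $S_i$ is definably simple. Concretely, there is a definable surjective homomorphism $\pi\colon\tilde G\to G$ with finite central kernel $F$. Set $\tilde X=\pi^{-1}(X)$. Since $F$ is finite, every lift of a torsion element of $G$ is torsion in $\tilde G$, so $Tor(\tilde G)=\pi^{-1}(Tor(G))\subseteq\tilde X$; as $\pi$ is finite-to-one, proving $\dim\tilde X=\dim\tilde G$ yields $\dim X=\dim G$. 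Writing $S=S_1\times\cdots\times S_k$, we have $\tilde G=A\times S$ and $Tor(\tilde G)=Tor(A)\times Tor(S)$.

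Next I would use the projection $p\colon\tilde G\to A$. The set $p(\tilde X)$ is definable and contains $Tor(A)$, so Theorem \ref{large} applied to $A$ gives $\dim p(\tilde X)=\dim A$. For each $t\in Tor(A)$ the fibre $\tilde X_t:=\{s\in S:(t,s)\in\tilde X\}$ is a definable subset of $S$ containing $Tor(S)$. Each simple factor $S_i$ has very good reduction, and the key property used in Lemma \ref{vgr}, namely compact domination of every maximal torus, transfers to $S$ because maximal tori of $S$ are products of maximal tori of the $S_i$'s. Lemma \ref{vgr} applied to $S$ and $\tilde X_t$ therefore yields $\dim\tilde X_t=\dim S$ for every $t\in Tor(A)$.

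Finally, the definable set $B=\{t\in A:\dim\tilde X_t=\dim S\}$ contains $Tor(A)$, so a second application of Theorem \ref{large} gives $\dim B=\dim A$. The o-minimal fibre dimension theorem then yields
\[
\dim\tilde X\;\geq\;\dim B+\dim S\;=\;\dim A+\dim S\;=\;\dim\tilde G,
\]
hence $\dim\tilde X=\dim\tilde G$ and so $\dim X=\dim G$. The main obstacle in this plan is invoking the structure theorem in the precise form stated above and, with it, transferring very good reduction (or at least compact domination of maximal tori) from each simple factor to the product $S$; once these structural inputs are granted, the argument is simply a fibre-dimension patching of the abelian case (Theorem \ref{large}) and the very-good-reduction case (Lemma \ref{vgr}).
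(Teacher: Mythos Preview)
Your strategy---combine Theorem~\ref{large} and Lemma~\ref{vgr} through a fibre-dimension argument---is exactly the paper's, but you fibre differently: the paper uses the extension $Z(G)\hookrightarrow G\to G/Z(G)$, applying Theorem~\ref{large} to the abelian \emph{fibres} $gZ(G)\cap X$ and the ``moreover'' clause of Lemma~\ref{vgr} to the \emph{base} $G/Z(G)$ (since $(G/Z(G))$ modulo its finite centre is a product of definably simple groups, hence has very good reduction). You instead pass to a product cover $\tilde G=A\times S\twoheadrightarrow G$ and fibre over the abelian factor $A$, applying Lemma~\ref{vgr} to the $S$-fibres and Theorem~\ref{large} to the base $B\subseteq A$. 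Both routes yield the same dimension inequality; your application of Theorem~\ref{large} to $p(\tilde X)$ is redundant, only the set $B$ matters.

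There is, however, a real issue with your structure input as stated. If each $S_i$ is genuinely definably simple then it is centreless, so the centre of $A\times S_1\times\cdots\times S_k$ is $A\times\{e\}$; any finite central $F$ then lies inside $A$, and $\tilde G/F$ is again a direct product $(A/F)\times\prod S_i$. This forces $G$ itself to split as such a product, which is of course false in general (e.g.\ $G=SU(2)$). The correct cover is $\tilde G=A\times G'$ with $G'$ \emph{semisimple} (definably connected with finite centre), coming from the multiplication map $Z(G)^0\times[G,G]\to G$; then $G'$ modulo its finite centre is a product of definably simple groups, and the ``moreover'' clause of Lemma~\ref{vgr} applies to $G'$. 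With that correction your argument goes through. The paper's route is a bit more economical precisely because it never needs the isogeny cover: it only uses that $(G/Z(G))/\text{(finite centre)}$ is a product of simples, which is the content of the references \cite{00PPS1}, \cite{00PS}, \cite{02PPS} already cited.
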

\begin{proof} By 4.1 in \cite{00PPS1} and 5.4 in \cite{00PS},
after modding out $G_1=G/Z(G)$ by its finite center, the group we
obtain is a direct product of simple groups. In particular, it has
very good reduction (see the proof of 5.1 in \cite{02PPS}) and hence
the ``moreover'' clause of Lemma \ref{vgr} holds for $G/Z(G)$. We
therefore have:

(i) For every definable $X_1\sub Z(G)$, if $Tor(Z(G))\sub X_1$ then
$\dim X_1=\dim Z(G)$ (by Theorem \ref{large} this is true for
subsets of $Z(G)^0$, but then it clearly follows for subsets of
$Z(G)$ as well).

(ii) For every definable $X_1\sub G$, if $Tor(G/Z(G))\sub X_1/Z(G)$
then $\dim(X_1/Z(G))=\dim(G/Z(G)).$ (by Lemma \ref{vgr} and our
previous observation).

We now proceed as follows, with $X\sub G$ a definable set containing
$Tor(G)$.  Given $g\in Tor(G)$ and $h\in Tor(Z(G))$, we have $gh\in
Tor(G)\sub X$. Hence, for every such $g$, we have $$X_g=\{h\in
Z(G):gh\in X\}\supseteq Tor(Z(G)),$$ so, by (i), $\dim X_g=\dim
Z(G)$. Since $gX_g= (gZ(G))\cap X$, we have, for every $g\in
Tor(G)$,
$$\dim gZ(G)\cap X=\dim Z(G).$$

 Let $$X_1=\{g\in G:\dim (gZ(G)\cap X)=\dim Z(G)\}.$$
It follows from the above that $Tor(G)\sub X_1$, and therefore
$Tor(G/Z(G))\sub X_1/Z(G)$. (Indeed, if $hZ(G)\in Tor(G/Z(G))$ then for
some $n\in \Na$, $h^n\in Z(G)$, and therefore for some $k\geq n$,
$h^k\in Z(G)^0$. Because $Z(G)^0$ is divisible there exists $h_1\in
Z(G)$, with $h_1^k=h^k$. Because $h_1$ is central,
$(hh_1^{-1})^k=e$, hence $hh_1^{-1}\in X_1$ and therefore $hZ(G)\in
X_1/Z(G)$). By (ii), $\dim (X_1/Z(G))=\dim (G/Z(G)).$

To finish the proof, consider the set $Y=(X_1Z(G))\cap X$. The
cosets of $Z(G)$ partition $Y$ into equivalence classes, each of
dimension
 $\dim Z(G)$ (by definition of $X_1$).  Since every $Z(G)$-coset of
 an element in $X_1$ intersects $X$
 nontrivially,
  we have $$\dim(Y/Z(G))=\dim(X_1/Z(G))=\dim(G/Z(G)).$$
Summarizing, we have
$$\dim Y\geq \dim Z(G)+\dim(G/Z(G))=\dim G,$$ and hence (since
$Y\sub X$) $\dim X=\dim G$.\end{proof}

\end{document}